\theoremstyle{definition}
\theoremstyle{remark}
\numberwithin{equation}{section}
\newtheorem{thm}{Theorem}[section]
\newtheorem{prop}[thm]{Proposition}
\newtheorem{lem}[thm]{Lemma}
\numberwithin{equation}{section}
\newcommand{\real}{{\mathbb R}}
\newcommand{\norm}[1]{\left\Vert#1\right\Vert}
\newcommand{\8}{\infty}
\renewcommand{\a}{\mathfrak{a}}
\begin{document}

\title[dimension-free estimates for...]{dimension-free estimates for the vector-valued variational operators}

\thanks{{\it 2010 Mathematics Subject Classification:} Primary: 42B20, 42B25. Secondary: 46E30}
\thanks{{\it Key words:} Variation inequalities, UMD lattice, dimension-free, averaging operators}
\author{Danqing He}
\address{Department of Mathematics, Sun Yat-sen (Zhongshan) University, Guangzhou, 510275, China}
\email{hedanqing@mail.sysu.edu.cn}
\author{Guixiang Hong}
\address{School of Mathematics and Statistics, Wuhan University, Wuhan 430072 and Hubei Key Laboratory of Computational Science, Wuhan University, Wuhan 430072, China}
\email{guixiang.hong@whu.edu.cn}
\author{Wei Liu}
\address{School of Mathematics and Statistics, Wuhan University, Wuhan 430072, China}
\email{Wl.math@whu.edu.cn}
\date{June 10, 2018.}
\begin{abstract}
In this paper, we study dimension-free $L^p$ estimates for UMD lattice-valued $q$-variations of Hardy-Littlewood averaging
operators associated with the Euclidean balls.
\end{abstract}
\maketitle

\bigskip


\section{Introduction}\label{ST1}



Stein  \cite{E.M.S1}
obtained the first dimension-free result  for the (ball) Hardy-Littlewood maximal operator,
as an application of the boundedness of spherical maximal operators; see \cite{SS} for more details of Stein's argument.
This inspired a lot of generalizations  for Hardy-Littlewood maximal operators related to other convex bodies, in particular
$\ell^r$ balls; see, for instance, \cite{Bour1}, \cite{Bour2}, \cite{Bour3},   \cite{Car}, and \cite{D.M}.
Two interesting results in this direction obtained recently were due to Bourgain,   Mirek,  Stein, and Wr\'{o}bel \cite{J.M.E.B, J.M.E.B1}, where
the authors proved the dimension-free estimates for $q$-variations of Hardy-Littlewood averaging operator defined over symmetric convex bodies in $\real^d$, and in $\mathbb{Z}^d$ respectively.

Variational inequalities call a lot of attention in past decades for various reasons, one of which is that we can obtain
pointwise convergence of the related sequence of operators without recalling a dense subspace, an advantage compared with the corresponding strategy using maximal operators.
Variational inequalities in most cases are much more difficult to be handled than the maximal inequalities since variational semi-norms (defined later) involve not only the size information but also the oscillation information of the underlying sequence of operators, 
which makes that it  always a challenging task to strengthen a maximal estimate to a variational one. The first variational result, to our knowledge, was
obtained by L\'{e}pingle \cite{D.L} for martingales, which was later reproved by  Pisier and Xu~\cite{G.P}.
Many interesting results in harmonic analysis follow from this direction (cf.e.g., \cite{JRKM1, YCC, AJ, TMC, RRJ, RJM, CQ, RG, AM, AX, AX1, RATCJ}), among which we highlight  \cite{Bour} due to Bourgain and \cite{RAJ} by Jones et al.


Vector-valued inequalities are natural generalizations of the scale-valued ones, one of which
in harmonic analysis is the famous Fefferman-Stein inequality \cite{FS}.
So is it possible to generalize \cite{J.M.E.B} to the vector-valued setting?
The study of vector-valued variational inequalities was initiated by
Pisier and Xu~\cite{G.P}.
More vector-valued variational inequalities were obtained in 
\cite{TMI, MA, MA1}.
  Ma and the second author  established vector-valued variational inequalities associated to differentiation ergodic averages and symmetric diffusion semigroups, where the functions take values in any Banach space of martingale cotype $q_0$ \cite{GXHTM}, and in the Banach lattice \cite{GXHTM1}.%

In this note we answer the question in previous paragraph by obtaining  dimension-free estimates for the
vector-valued variational averaging operators.
Our result generalizes also Deleaval and Kriegler \cite{D.K}, where
the dimension-free version of the Fefferman-Stein inequality was   obtained.

To state our results, let us recall the definition of the $q$-variation. For $q\in[1,\8)$ the $q$-variation seminorm $V_q$ of a complex-valued function $(0,\8)\times\real^d \ni(t,x)\mapsto \a_t(x)$ is defined by
$$V_q(\a_t(x):t\in Z)=\sup_{\substack{0<t_0<\cdots<t_J\\ t_j\in Z}}\bigg(\sum_{j=0}^J|\a_{t_{j+1}}(x)-\a_{t_j}(x)|^q\bigg)^{1/q},$$
where $Z$ is a subset of $(0,\8)$ and the supremum is taken over all finite increasing sequences in $Z$.
In order to avoid some problems with measurability of $V_q(\a_t(x):t\in Z)$ we assume that $(0,\8)\ni t\mapsto\a_t(x)$ is always a continuous function for every $x\in \real^d$.

We recall the necessary preliminaries on UMD lattices, i.e. Banach lattices which are UMD spaces (see \cite{G.P1}). The UMD lattice $X$ can be represented as a lattice consisting of (equivalence classes of) measurable functions on some measure space $(\Omega,\mu)$. The $X$-valued functions on $\real^d$ can be viewed as scalar-valued functions on $\real^d\times \Omega$. 
We refer the readers to the nice book by Lindenstrauss and Tzafriri \cite{J.L} for more information on Banach lattices. Throughout the paper, $X$ denotes a UMD lattice.%

Let $f$ be a $X$-valued locally integrable function which is defined on $\real^d$. For $t>0$, let $B_t$ denote the open ball centered at the origin 0 with radius $r(B_t)$ equal to $t$. Then we define
$$A_tf(x,\omega)=\frac{1}{|B_t|}\int_{B_t}f(x-y,\omega)dy,\,\,x\in\real^d,\,\omega\in\Omega$$
where $|\cdot|$ denotes the Lebesgue measure. These are the central averaging operators on $\real^d$. The $X$-valued $q$-variation of the family of averaging operators $(A_t)_{t>0}$ is defined as
$$V_q(A_tf(x,\omega):t>0)=\sup_{\substack{0<t_0<\cdots<t_J\\ t_j\in Z}}\bigg(\sum_{j=0}^J|A_{t_{j+1}}f(x,\omega)-A_{t_j}f(x,\omega)|^q\bigg)^{1/q},$$
where $Z$ is a subset of $(0,\8)$ and the supremum is taken over all finite increasing sequences in $Z$.

\medskip

The main result of this paper is the following theorem.%

\begin{thm}\label{main thm}
Let $X$ be a UMD lattice. Let $1<p<\8$ and $2<q<\8$. Then there exists a constant $C_{p,q,X}>0$ independent of the dimension $d$ such that
\begin{equation*}\label{main inequalities}
  \norm{V_q(A_tf(x,\omega):t>0)}_{L^p(\real^d;X)}\leq C_{p,q,X}\norm{f}_{L^p(\real^d;X)},\quad \forall f\in L^p(\real^d;X).
\end{equation*}
\end{thm}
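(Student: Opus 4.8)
\emph{Strategy and reduction.} The plan is to run the long/short variation splitting of Bourgain, Mirek, Stein and Wr\'{o}bel \cite{J.M.E.B} directly on $X$-valued functions, and to replace each scalar dimension-free $L^p$-estimate that occurs by its $L^p(\real^d;X)$-analogue, which we draw from the UMD-lattice technology behind Deleaval--Kriegler \cite{D.K} (for the Euclidean Littlewood--Paley and maximal pieces) and from \cite{GXHTM1} (for the semigroup pieces). The structural fact that keeps all constants dimension-free is that the normalized ball symbol $m(\xi):=|B_1|^{-1}\widehat{\mathbf{1}_{B_1}}(\xi)$, whose dilates $m(t\xi)$ are the symbols of $A_t$, is essentially a Gaussian: writing $c_d\asymp 1/d$ for the second Taylor coefficient of $m$ at $0$, one has $m(\xi)=e^{-c_d|\xi|^2}\bigl(1+O(|\xi|^4/d)\bigr)$ for $|\xi|\lesssim\sqrt d$ and $|m(\xi)|$ decays rapidly for $|\xi|\gtrsim\sqrt d$, so that $A_t$ may be compared with the rescaled heat semigroup $e^{c_dt^2\Delta}$ with errors summable uniformly in $d$. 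Concretely, by the standard reduction of the $q$-variation to dyadic scales,
\[
V_q(A_tf:t>0)\ \lesssim_q\ V_q\bigl(A_{2^n}f:n\in\mathbb Z\bigr)+\Bigl(\sum_{n\in\mathbb Z}V_q\bigl(A_tf:t\in[2^n,2^{n+1}]\bigr)^q\Bigr)^{1/q},
\]
so it suffices to bound the long and the short variation separately in $L^p(\real^d;X)$ with $d$-independent constants.

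\emph{Short variations.} Since $q>2$, on each dyadic block $V_q\le V_1$, so Cauchy--Schwarz and $\ell^2\hookrightarrow\ell^q$ dominate the short-variation term pointwise by the $g$-function
\[
Gf(x,\omega)=\Bigl(\int_0^\infty\bigl|t\,\partial_tA_tf(x,\omega)\bigr|^2\,\tfrac{dt}{t}\Bigr)^{1/2}.
\]
Here $t\,\partial_tA_t$ has the radial Fourier symbol $\psi(t\xi)$ with $\psi(\xi)=\xi\cdot\nabla m(\xi)$, and $\psi$ satisfies $\psi(0)=0$, $|\psi(\xi)|\lesssim|\xi|$ for $|\xi|\le 1$ (as $|\nabla m|\le 2\pi$ uniformly in $d$), and $\int_0^\infty|\psi(t\xi)|^2\,dt/t\le C$ uniformly in $\xi$ and in $d$ (immediately from the Gaussian comparison, where $\psi(\xi)\approx -2c_d|\xi|^2e^{-c_d|\xi|^2}$). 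It then remains to prove $\|Gf\|_{L^p(\real^d;X)}\lesssim_{p,X}\|f\|_{L^p(\real^d;X)}$ with a $d$-free constant; since $\psi$ is a radial symbol vanishing at $0$ and at $\infty$ with all relevant bounds uniform in $d$, this follows by comparison with the Littlewood--Paley $g$-function of the heat semigroup, whose $L^p(\real^d;X)$-boundedness is dimension-free on UMD lattices --- the type of estimate underlying \cite{D.K}.

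\emph{Long variations.} Write $A_{2^n}=e^{c_d4^n\Delta}+E_n$. Subadditivity of $V_q$ and the elementary bound $V_q(c_n:n)\le 2\bigl(\sum_n|c_n|^q\bigr)^{1/q}\le 2\bigl(\sum_n|c_n|^2\bigr)^{1/2}$, valid for $q>2$, give
\[
V_q\bigl(A_{2^n}f:n\bigr)\ \le\ V_q\bigl(e^{c_d4^n\Delta}f:n\bigr)+2\Bigl(\sum_{n\in\mathbb Z}|E_nf|^2\Bigr)^{1/2}.
\]
The first term is at most $V_q(e^{s\Delta}f:s>0)$, and $\|V_q(e^{s\Delta}f:s>0)\|_{L^p(\real^d;X)}\lesssim_{p,q,X}\|f\|_{L^p(\real^d;X)}$ by the variational inequality for symmetric diffusion semigroups on UMD lattices \cite{GXHTM1}, whose constant depends only on $p,q,X$ and not on the semigroup --- hence not on $d$; the $d$-dependent time change $s\mapsto c_d4^ns$ merely reparametrizes $(e^{s\Delta})_{s>0}$ and is harmless. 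For the second term, the symbol of $E_n$ is $m(2^n\xi)-e^{-c_d4^n|\xi|^2}$, and the Gaussian comparison gives $\sup_\xi\sum_n\bigl|m(2^n\xi)-e^{-c_d4^n|\xi|^2}\bigr|^2\le C$ together with the corresponding Marcinkiewicz/Mihlin-type bounds on each dyadic frequency block, all with $d$-independent constants; feeding these into the dimension-free vector-valued Littlewood--Paley theory on $L^p(\real^d;X)$ --- via Khintchine--Maurey, which converts the square function into a Rademacher average of multiplier operators, as in \cite{D.K} --- yields $\bigl\|(\sum_n|E_nf|^2)^{1/2}\bigr\|_{L^p(\real^d;X)}\lesssim_{p,X}\|f\|_{L^p(\real^d;X)}$, and the theorem follows.

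\emph{Main obstacle.} The subtle point is that $m$ does \emph{not} decay pointwise at a rate independent of $d$, so the only efficient source of dimension-free gains is the Gaussian/heat comparison with its intrinsically $d$-dependent time scale $c_d\asymp 1/d$; the actual work is to check that the error symbols $E_n$ and the $g$-function symbol $\psi$ are genuinely summable uniformly in $d$, and simultaneously to push the resulting square-function and semigroup-variational estimates through the UMD-lattice functional calculus so that the constants inherit this uniformity --- that is, to fuse the Bourgain--Mirek--Stein--Wr\'{o}bel multiplier analysis with the vector-valued machinery of \cite{D.K} and \cite{GXHTM1}.
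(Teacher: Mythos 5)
Your overall architecture (long/short splitting, domination of the short part by the square function $\bigl(\int_0^\infty|t\frac{d}{dt}A_tf|^2\frac{dt}{t}\bigr)^{1/2}$, comparison of $A_{2^n}$ with a symmetric diffusion semigroup whose $q$-variation is handled by \cite{GXHTM1}) is the same as the paper's, and using the heat semigroup instead of the Poisson semigroup is immaterial. But there are two genuine gaps where the actual work of the paper is replaced by an appeal to a principle that does not exist. The first is the short variation: you reduce correctly to a dimension-free $L^p(\real^d;X)$ bound for the $g$-function with symbol $\psi(t\xi)$, $\psi(\xi)=\xi\cdot\nabla m(\xi)$, but then assert that this ``follows by comparison with the Littlewood--Paley $g$-function of the heat semigroup.'' The symbol estimates you list ($|\psi(\xi)|\lesssim|\xi|$ near $0$, $\int_0^\infty|\psi(t\xi)|^2\frac{dt}{t}\le C$) only give the $L^2$ bound via Plancherel; there is no transference principle that converts size/decay agreement of two radial symbols into an $L^p(\real^d;X)$ comparison of the associated square functions, let alone with constants uniform in $d$ (standard H\"ormander--Mikhlin or Calder\'on--Zygmund arguments produce $d$-dependent constants). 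This is exactly the hardest step: the paper, following the appendix of \cite{J.M.E.B}, writes $A_t=d\int_0^1r^{d-1}S_{tr}\,dr$ to pass to the spherical means $S_t$, embeds $S_t$ in the analytic family $S_t^\alpha$ of Bessel-type kernels $K^\alpha$, proves UMD-lattice-valued bounds at $Re(\alpha)=3$ through weighted Calder\'on--Zygmund estimates and the Rubio de Francia-type Lemma \ref{lattices-weighted lem}, interpolates to get the bound for $d\ge d_0(p,X)$, and removes the residual dimension dependence by the Stein--Str\"omberg rotation argument (averaging $S^\rho_t$ over $\mathcal{O}(d)$). Nothing in your sketch substitutes for this; if a heat-semigroup comparison alone sufficed for the full range $1<p<\infty$, the Bessel/rotation machinery of \cite{J.M.E.B} would have been unnecessary already in the scalar case.

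The second gap is the error term in the long variation. You claim $\bigl\|(\sum_n|E_nf|^2)^{1/2}\bigr\|_{L^p(\real^d;X)}\lesssim\|f\|_{L^p(\real^d;X)}$ with $d$-free constant from ``Marcinkiewicz/Mihlin-type bounds on each dyadic block with $d$-independent constants'' fed into Khintchine--Maurey; but Mihlin-type constants are not dimension-free, and Khintchine--Maurey only randomizes the square function, it does not supply dimension-free multiplier bounds. The bound $\sup_\xi\sum_n|m(2^n\xi)-e^{-c_d4^n|\xi|^2}|^2\le C$ alone gives $L^2$ and nothing to interpolate with. The working scheme (Section \ref{ST3}) is: decompose $f=\sum_jW_{j+n}f$ with $W_n=P_{L2^n}-P_{L2^{n-1}}$, prove the $L^2(\real^d;L^2(\mu))$ estimate \emph{with geometric gain} $2^{-|j|/2}$ from the symbol estimates of Proposition \ref{constant m}, prove $L^p(\real^d;Z)$ estimates \emph{uniform in $j$} from the dimension-free vector-valued maximal inequalities (Deleaval--Kriegler for $A_{2^n}$, Xu for the semigroup) together with the lattice Littlewood--Paley inequality of Lemma \ref{Littlewood-paley inequalities}, and then interpolate twice through $X=[L^2(\mu),Y]_{\theta_0}$ before summing over $j$. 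Your outline is missing both the decay-producing decomposition in $j$ and the no-gain $L^p(X)$ endpoint, so the interpolation that actually yields the dimension-free $L^p(\real^d;X)$ bound cannot be run as written.
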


If the constants $C_{p,q,X}$ in inequality (\ref{main inequalities}) are allowed to depend on $d$, then this result has essentially already been known in \cite{GXHTM1} due to the second author and Ma, which follows from the weighted norm inequalities \cite{MA, MA1} through the application of Rubio de Francia's extrapolation theorem. The point of this result lies in the fact that the constant $C_{p,q,X}>0$ can be taken to be independent of the dimension $d$. Then, the difficulty becomes apparent since as far as we know almost all the weighted norm inequalities are dimension dependent except the ones for radial weights in \cite{CrSo13} and there does not exist a version of dimension-free extrapolation theorem. Motivated by the scalar-valued result \cite{J.M.E.B}, the key idea is to exploit the UMD-lattice valued variational estimates for semigroup established in \cite{GXHTM1} and the dimension-free UMD-lattice valued square function estimates. The idea in the proof of the latter can be traced back to the one due to Stein, see e.g. \cite{SS}.

To prove Theorem \ref{main thm}, we need the following standard method of dealing with $q$-variation.%

We will handle $V_q(A_tf(x,\omega):t>0)$ by dividing it into long and short variations. Fixed an increasing sequence $(t_i)_{i>0}$.
~For each interval $I_i=(t_i,t_{i+1}]$, first we consider two cases:
\medskip

\begin{itemize}
  \item Case 1: $I_i$ does not contain any integral power of 2;
  \item Case 2: $I_i$ contains integral powers of 2.
  \end{itemize}

\medskip
In Case 1, for interval $I_i$, there are some $k\in \mathbb{Z}$ such that $I_i\subset(2^k,2^{k+1}]$. In Case 2, letting $m_i=\min\{k :2^k\in I_i\}$ and $n_i=\max\{k:2^k\in I_i\}$, we divide $I_i$ into three subintervals: $(t_i,2^{m_i}]$, $(2^{m_i},2^{n_i}]$ and $(2^{n_i},t_{i+1}]$ (noting that if $m_i=n_i$, the middle interval is empty). Then we introduce two collections of intervals:
\medskip

\begin{itemize}
  \item $\mathcal{S}$ consists of all intervals in Case 1, and $(t_i,2^{m_i}]$, $(2^{n_i},t_{i+1}]$ in Case 2 and $\mathcal{S}_k$ consists of all intervals in $\mathcal{S}$ and contained in $(2^k,2^{k+1}]$;
  \item $\mathcal{L}$ consists of all intervals $(2^{m_j},2^{n_j}]$ in Case 2.
\end{itemize}
\medskip

Note that $\mathcal{S}$, $\mathcal{L}$ are two disjoint families of intervals. Hence we have
\begin{equation*}
 \begin{split}
 &\big(\sum_i|A_{t_{i+1}}f(x,\omega)-A_{t_i}f(x,\omega)|^q\big)^{\frac{1}{q}}\leq C_q\big(\sum_{\substack{i,\\{I_i=(t_i,t_{i+1}]\in\mathcal{S}}}}|A_{t_{i+1}}f(x,\omega)-A_{t_i}f(x,\omega)|^q\big)^{\frac{1}{q}}\\
&+C_q\big(\sum_{\substack{i,\\I_i=(t_i,t_{i+1}]\in\mathcal{L}}}|A_{t_{i+1}}f(x,\omega)-A_{t_i}f(x,\omega)|^q\big)^{\frac{1}{q}}=C_q(I+II).
\end{split}
\end{equation*}

\medskip

The first term on the right hand side is controlled by
\begin{align*}
 I&\leq\big(\sum_{k\in \mathbb{Z}}\sup_{(t_i)_i\in\mathcal{S}_k}\sum_{i}|A_{t_{i+1}}f(x,\omega)-A_{t_i}f(x,\omega)|^2\big)^{\frac{1}{2}}\\
 &\leq \big(\sum_{k\in \mathbb{Z}}V_2(A_tf(x,\omega):t\in(2^k,2^{k+1}])^2\big)^{1/2}
\end{align*}
which is denoted by $\mathcal{SV}_2(\mathcal{A})f$. While the second term is controlled by
$$II\leq V_q(A_tf(x,\omega):t\in\{2^k:k\in \mathbb{Z}\})$$
which is denoted by $\mathcal{LV}_q(\mathcal{A})f$.%

\medskip

The paper is organized as follows. In Section \ref{ST2}, we estimate the short variation $\mathcal{SV}_2(\mathcal{A})f$. 
The long variation $\mathcal{LV}_q(\mathcal{A})f$ is treated in Section \ref{ST3} by appealing to the known UMD-lattice valued $q$-variational estimate for semigroups. 



\bigskip
\section{the estimate of the short variation}\label{ST2}
In this section we estimate the short variation $\mathcal{SV}_2(\mathcal{A})f$.%

\begin{thm}\label{S-estimate}
Let $X$ be a UMD lattice. Let $1<p<\8$, then there exists a constant $C_{p,X}>0$ independent of the dimension $d$ such that
\begin{equation}\label{S-inequalities}
  \norm{\mathcal{SV}_2(\mathcal{A})f(x,\omega)}_{L^p(\real^d;X)}\leq C_{p,X}\norm{f(x,\omega)}_{L^p(\real^d;X)},\quad\forall f\in L^p(\real^d;X).
\end{equation}
\end{thm}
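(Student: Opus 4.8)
The plan is to bound the short variation $\mathcal{SV}_2(\mathcal{A})f$ by a square function to which the dimension-free techniques of Stein, in the UMD-lattice-valued formulation, apply. First I would use the standard fact (going back to L\'epingle, and in the dyadic-block formulation essentially a telescoping/Cauchy--Schwarz argument on each interval $(2^k,2^{k+1}]$) that the $V_2$ seminorm of a continuously parametrized family is controlled by an $\ell^2$-sum of dyadic differences plus a square function of derivatives. Concretely, writing $t\mapsto A_t f(x,\omega)$, on each $(2^k,2^{k+1}]$ one has the pointwise bound
\begin{equation*}
V_2\bigl(A_tf(x,\omega):t\in(2^k,2^{k+1}]\bigr)\leq C\,|A_{2^{k+1}}f(x,\omega)-A_{2^k}f(x,\omega)|+C\Bigl(\int_{2^k}^{2^{k+1}}\bigl|t\tfrac{d}{dt}A_tf(x,\omega)\bigr|^2\tfrac{dt}{t}\Bigr)^{1/2},
\end{equation*}
so that summing in $k$ gives
\begin{equation*}
\mathcal{SV}_2(\mathcal{A})f(x,\omega)\leq C\Bigl(\sum_{k\in\mathbb{Z}}|A_{2^{k+1}}f-A_{2^k}f|^2\Bigr)^{1/2}+C\Bigl(\int_0^\infty\bigl|t\tfrac{d}{dt}A_tf(x,\omega)\bigr|^2\tfrac{dt}{t}\Bigr)^{1/2}=:C\,G_1f+C\,G_2f.
\end{equation*}

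For the second (continuous square function) term $G_2f$, I would compute $t\frac{d}{dt}A_t f$ as convolution with a nice kernel: $t\frac{d}{dt}A_tf = f * \psi_t$ where $\psi_t(y)=t^{-d}\psi(y/t)$ and $\psi$ is a fixed, dimension-independent measure (a multiple of surface measure on the unit sphere minus $d$ times the normalized ball indicator), in particular $\int\psi=0$. Then $G_2 f$ is a Littlewood--Paley-type $g$-function built from a single compactly supported mean-zero kernel. The key point—this is where the excerpt tells us to invoke ``dimension-free UMD-lattice valued square function estimates'' whose idea ``can be traced back to the one due to Stein''—is that the $L^p(\real^d;X)$ bound for such a $g$-function is independent of $d$: this follows from the dimension-free bound for the Hardy--Littlewood maximal function (Deleaval--Kriegler, together with the Fefferman--Stein inequality in UMD lattices) controlling the relevant majorants, combined with the UMD-valued vector-valued Calder\'on--Zygmund / Littlewood--Paley machinery, exactly as in Stein--Str\"omberg \cite{SS}. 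I would state this as a lemma: $\|G_2 f\|_{L^p(\real^d;X)}\leq C_{p,X}\|f\|_{L^p(\real^d;X)}$ with $C_{p,X}$ free of $d$.

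For the first (discrete) term $G_1f$, note $A_{2^{k+1}}f-A_{2^k}f = f*\phi_k$ with $\phi_k(y)=2^{-(k+1)d}\mathbf{1}_{B_{2^{k+1}}}(y)/|B_1|-2^{-kd}\mathbf{1}_{B_{2^k}}(y)/|B_1|$, again a dilation of a fixed mean-zero compactly supported kernel. So $G_1f$ is the discrete analogue of $G_2f$ and is handled by the same dimension-free UMD-lattice square function estimate (or deduced from $G_2$ by writing the difference as an integral of $t\frac{d}{dt}A_tf$ over $[2^k,2^{k+1}]$ and applying Cauchy--Schwarz, which directly gives $G_1f\leq G_2f$ pointwise up to a constant, reducing everything to the $G_2$ bound).

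The main obstacle is the dimension-free square function estimate itself: one must verify that the constants in the Littlewood--Paley inequality for the $g$-function attached to the averaging kernel, in the UMD-lattice-valued setting, do not degrade with $d$. This requires the Stein-type argument—controlling the square function pointwise by compositions and iterates of the Hardy--Littlewood maximal operator (using that the kernel $\psi$ is supported in the unit ball with bounded density in an averaged sense)—together with the dimension-free $L^p(\real^d;X)$ bound for the maximal operator from \cite{D.K} and the vector-valued (UMD) duality/randomization that replaces the scalar $L^2$ orthogonality argument. Once that lemma is in place, Theorem \ref{S-estimate} follows by combining the two displays above.
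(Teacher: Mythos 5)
Your reduction step is fine and matches the paper: bounding $\mathcal{SV}_2(\mathcal{A})f$ pointwise by the square function $\big(\int_0^\8|t\frac{d}{dt}A_tf|^2\frac{dt}{t}\big)^{1/2}$ via Cauchy--Schwarz on each interval $(2^k,2^{k+1}]$ is exactly Lemma \ref{S-lemma} (your extra discrete term $G_1$ is superfluous, since the short variation only involves points within a single dyadic interval, and as you note $G_1\lesssim G_2$ pointwise anyway). The genuine gap is in the step you yourself call the main obstacle: the dimension-free $L^p(\real^d;X)$ bound for $G_2$. The mechanism you propose --- pointwise control of this $g$-function by compositions and iterates of the Hardy--Littlewood maximal operator, combined with the Deleaval--Kriegler bound and vector-valued Calder\'on--Zygmund/Littlewood--Paley machinery ``as in Stein--Str\"omberg'' --- would not deliver it. First, $t\frac{d}{dt}A_tf=d\,(S_tf-A_tf)$, so the kernel is \emph{not} a fixed dimension-independent object (note the factor $d$ and the ambient dimension in the measures). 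Second, a square function built on a mean-zero kernel encodes cancellation and is not pointwise dominated by maximal operators, so the dimension-free maximal bound of \cite{D.K} cannot be fed in the way you describe; the Stein--Str\"omberg argument in \cite{SS} is an argument for the maximal function and does not transfer to $G_2$ by ``controlling majorants''. Third, the vector-valued Calder\'on--Zygmund route produces constants that depend on $d$ (indeed the paper uses it only to get a crude, dimension-\emph{dependent} bound at one endpoint of an interpolation). So the central lemma of your plan is asserted, not proved.

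What the paper actually does after the reduction is: (i) dominate $G_2f$ by the spherical square function $\big(\int_0^\8|t\frac{d}{dt}S_tf|^2\frac{dt}{t}\big)^{1/2}$ using polar coordinates and Minkowski's integral inequality; (ii) following \cite{J.M.E.B}, bound this spherical square function in a fixed dimension $d_0(p,X)$ with constant depending only on $(p,X)$, by Stein's interpolation for the analytic family $S^\alpha_t$ built on the Bochner--Riesz type kernels $K^\alpha$: a Plancherel ($L^2$) estimate valid for $Re(\alpha)>(3-d)/2$ whose range improves with $d$, an $L^p(\real^d;X)$ estimate at $Re(\alpha)=3$ obtained from weighted norm inequalities for an $H$-valued Calder\'on--Zygmund operator together with Rubio de Francia's lemma (Lemma \ref{lattices-weighted lem}) and the representation $X=[L^2(\mu),Y]_{\theta_0}$; (iii) transport the $d_0$-dimensional bound to every $d\ge d_0$ by the rotation method, writing $S_t$ as an average over $\mathcal{O}(d)$ of rotated copies of the $d_0$-dimensional spherical means acting on the first $d_0$ coordinates; and (iv) cover $d<d_0(p,X)$ by the dimension-dependent result of \cite{GXHTM1}. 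None of these ingredients --- the spherical reduction, the analytic-family interpolation trading Bessel decay against the dimension, the rotation/descent step, or the small-dimension fallback --- appears in your proposal, so the claimed dimension-free square function estimate, and hence the theorem, remains unestablished as written.
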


Let $\mathcal{S}(\real^d)$ denote the set of Schwartz functions on $\real^d$. Let $f\in \mathcal{S}(\real^d)\otimes X$,  where $f=\sum_{i=1}^nx_if_i$, $f_i\in  \mathcal{S}(\real^d)$, $x_i\in X$. By a simple density argument, it suffices to establish Theorem \ref{S-estimate} for all $f\in\mathcal{S}(\real^d)\otimes X$.

The estimate of short variation $\mathcal{SV}_2(\mathcal{A})f$ will be based on the following lemma.
\begin{lem}\label{S-lemma}
Given a $k\in \mathbb{Z}$ and a differentiable function $\phi:(2^k,2^{k+1}]\rightarrow[0,\8)$, then we have
\begin{equation}\label{short variation estimate}
V_2(\phi_t:t\in(2^k,2^{k+1}])\leq \big(\int^{2^{k+1}}_{2^k}|t\frac{d}{dt}\phi_t|^2\frac{dt}{t}\big)^{\frac{1}{2}}.
\end{equation}
\begin{proof}
Let $2^k<t_0<t_1<\cdots<t_J\leq 2^{k+1}$, then by the Cauchy-Schwarz inequality we have
$$|\phi_{t_{i+1}}-\phi_{t_i}|^2
=|\int_{t_i}^{t_{i+1}}\frac{d\phi_t}{dt}dt|^2\leq(t_{i+1}-t_i)\int_{t_i}^{t_{i+1}}|\frac{d\phi_t}{dt}|^2dt
\leq\int_{t_i}^{t_{i+1}}|t\frac{d\phi_t}{dt}|^2\frac{dt}{t},$$
where we use the equality $t_{i+1}-t_i\le 2^k$. Hence
$$V_2(\phi_t:t\in(2^k,2^{k+1}])^2\leq \int_{2^k}^{2^{k+1}}|t\frac{d\phi_t}{dt}|^2\frac{dt}{t}.$$
The lemma is proved.
\end{proof}
\end{lem}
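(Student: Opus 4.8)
The plan is to reduce the $V_2$-seminorm on the dyadic block to a single integral by exploiting the fact that the block $(2^k,2^{k+1}]$ has length exactly $2^k$, which is dominated by every point $t$ inside it. Concretely, I fix an arbitrary finite increasing sequence $2^k<t_0<t_1<\cdots<t_J\leq 2^{k+1}$, estimate each increment $|\phi_{t_{i+1}}-\phi_{t_i}|$ separately, and sum. The square root at the end then converts the estimate on the sum into the desired bound on the $V_2$-seminorm.

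For a single increment I would write $\phi$ via the fundamental theorem of calculus as $\phi_{t_{i+1}}-\phi_{t_i}=\int_{t_i}^{t_{i+1}}\frac{d\phi_t}{dt}\,dt$, which is legitimate since $\phi$ is assumed differentiable, and then apply the Cauchy--Schwarz inequality to the pairing against the constant function $1$ on $[t_i,t_{i+1}]$:
$$|\phi_{t_{i+1}}-\phi_{t_i}|^2\leq (t_{i+1}-t_i)\int_{t_i}^{t_{i+1}}\Big|\frac{d\phi_t}{dt}\Big|^2\,dt.$$
The key structural step is to absorb the gap factor $t_{i+1}-t_i$ into the scale-invariant weight $\frac{dt}{t}$. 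Because the entire sequence lies in $(2^k,2^{k+1}]$, one has $t_{i+1}-t_i\leq 2^{k+1}-2^k=2^k<t$ for every $t\in[t_i,t_{i+1}]$, so $t_{i+1}-t_i\leq t$ pointwise on the interval of integration. Substituting this bound replaces the right-hand side by $\int_{t_i}^{t_{i+1}}|t\frac{d\phi_t}{dt}|^2\frac{dt}{t}$.

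Finally, since the intervals $(t_i,t_{i+1}]$ are pairwise disjoint and all contained in $(2^k,2^{k+1}]$, summing over $i$ collapses the sum of integrals into a single integral over the full block, giving
$$\sum_{i=0}^{J-1}|\phi_{t_{i+1}}-\phi_{t_i}|^2\leq \int_{2^k}^{2^{k+1}}\Big|t\frac{d\phi_t}{dt}\Big|^2\frac{dt}{t}.$$
Taking the supremum over all admissible increasing sequences and then the square root yields the claim. There is no genuine obstacle here; the argument is a direct combination of the fundamental theorem of calculus, Cauchy--Schwarz, and the elementary observation that on a dyadic block the gap between consecutive sampling times is bounded by the running variable $t$. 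The only point worth emphasizing is that this last observation is precisely what produces the scale-invariant weight $\frac{dt}{t}$ with a constant independent of $k$, which is what makes the bound survive the subsequent summation over $k$ with a dimension-free constant.
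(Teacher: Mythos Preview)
Your proof is correct and follows essentially the same route as the paper's: fundamental theorem of calculus, Cauchy--Schwarz, and the observation $t_{i+1}-t_i\le 2^k<t$ on the dyadic block to convert the gap factor into the weight $\frac{dt}{t}$. Your write-up is in fact slightly more explicit than the paper's about why $t_{i+1}-t_i\le t$ pointwise, but the argument is identical.
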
%

\medskip

By Lemma \ref{S-lemma}, we have
$$\big(\sum_{k\in \mathbb{Z}}V_2(A_tf(x,\omega):t\in(2^k,2^{k+1}])^2\big)^{1/2}\leq {\big(\int^{\8}_0|t\frac{d}{dt}A_tf|^2\frac{dt}{t}\big)^{{1}/{2}}},$$
then Theorem \ref{S-estimate} reduces to prove
\begin{equation}\label{estimate for square function}
\|{\big(\int^{\8}_0|t\frac{d}{dt}A_tf|^2\frac{dt}{t}\big)^{\frac{1}{2}}}\|_{L^p(\real^d;X)}\leq C_{p,X}\norm{f}_{L^p(\real^d;X)}.
\end{equation}
Denote
$$S_tf(x,\omega)=\int_{\mathbb{S}^{d-1}}f(x-ty,\omega)d\sigma(y),$$
where $d\sigma$ denotes the normalized Haar measure on $\mathbb{S}^{d-1}$. Then by the coordinate formula, we have
\begin{align*}
A_tf(x,\omega)&=\frac{1}{|B_t|}\int_{B_t}f(x-y,\omega)dy\\
              &=\frac{\int_{B_t}f(x-y,\omega)dy}{\int_{|y|\leq t}dy}\\
              &= \frac{d}{t^d}\int_0^t\int_{\mathbb{S}^{d-1}}f(x-ry,\omega)r^{d-1}d\sigma(y)dr\\
              &=d\int_0^1\int_{\mathbb{S}^{d-1}}f(x-rty,\omega)r^{d-1}d\sigma(y)dr\\
              &=d\int_0^1r^{d-1}S_{tr}f(x,\omega)dr.
\end{align*}
Then, we obtain
$$\frac{d}{dt}A_tf(x,\omega)=d\int_0^1r^{d-1}\frac{d}{dt}S_{tr}f(x,\omega)dr.$$
Using Minkowski's integral inequality, we have
\begin{align*}
  \bigg(\int^{\8}_0|t\frac{d}{dt}A_tf(x,\omega)|^2\frac{dt}{t}\bigg)^{\frac{1}{2}}&=d\bigg(\int_0^{\8}|t\int_0^1r^{d-1}
\frac{d}{dt}S_{tr}f(x,\omega)dr|^2\frac{dt}{t}\bigg)^\frac{1}{2}\\
&\leq d\int_0^1r^{d-1}\bigg(\int_0^{\8}|t\frac{d}{dt}S_{tr}f(x,\omega)|^2\frac{dt}{t}\bigg)^\frac{1}{2}dr\\
&\leq \bigg(\int_0^{\8}|t\frac{d}{dt}S_{t}f(x,\omega)|^2\frac{dt}{t}\bigg)^\frac{1}{2}.
\end{align*}
To prove inequality (\ref{estimate for square function}), it is enough to prove
\begin{equation}\label{eatimate for square function}
\|{\big(\int^{\8}_0|t\frac{d}{dt}S_tf|^2\frac{dt}{t}\big)^{\frac{1}{2}}}\|_{L^p(\real^d;X)}\leq C_{p,X}\norm{f}_{L^p(\real^d;X)}.
\end{equation}

According to the above discussions, to prove Theorem \ref{S-estimate}, we only need to prove the following proposition.
\begin{prop}\label{S-lem}
Let $1<p<\8$ and $X$ be a UMD lattice. Then there exists $d_0(p, X)\in\mathbb{N}$ such that for $d\geq d_0(p,X)$ and every $f\in L^p(\real^d;X)$, we have
$$\|{\big(\int^{\8}_0|t\frac{d}{dt}S_tf|^2\frac{dt}{t}\big)^{\frac{1}{2}}}\|_{L^p(\real^d;X)}\leq C_{p,X}\norm{f}_{L^p(\real^d;X)},$$
where the constant $C_{p,X}>0$ independent of the dimension $d$.
\end{prop}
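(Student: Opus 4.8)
The plan is to recognize $S_t$ as the spherical mean operator and to study the square function
\[
G f(x,\omega) = \Big(\int_0^\infty \big|t\tfrac{d}{dt}S_t f(x,\omega)\big|^2\,\tfrac{dt}{t}\Big)^{1/2}
\]
via its Fourier multiplier. On the Fourier transform side, $\widehat{S_t f}(\xi,\omega) = m(t|\xi|)\widehat f(\xi,\omega)$ where $m(s) = \widehat{d\sigma}(s)$ is (a constant multiple of) $s^{-(d-2)/2}J_{(d-2)/2}(s)$, the Bessel function governing the spherical measure. Thus $t\tfrac{d}{dt}S_t$ has multiplier $\psi_t(\xi) = t|\xi|\,m'(t|\xi|)$, and $G$ is the $g$-function attached to the family $\{\psi_t\}_{t>0}$. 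The key point, following Stein's classical argument (see \cite{SS}) and its vector-valued refinement in \cite{D.K}, is that for $d$ large the function $s\mapsto sm'(s)$ decays like $s^{-(d-2)/2+1/2}$ at infinity and vanishes to order $2$ at the origin, so the associated Littlewood–Paley-type square function is bounded on $L^p(\real^d)$ \emph{with a dimension-free constant} once $d$ exceeds a threshold depending only on $p$. I would make this precise by writing $G f \lesssim G_1 f + G_2 f$, splitting according to whether $t|\xi| \le 1$ or $t|\xi| > 1$ (equivalently, decomposing the multiplier), so that each piece is handled by a standard square function estimate with explicit control of the constants in terms of the decay/vanishing exponents of $m$.

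For the $X$-valued statement, I would invoke the UMD-lattice machinery: a Littlewood–Paley square function estimate of the form
\[
\Big\| \Big(\int_0^\infty |\varphi_t * f|^2\,\tfrac{dt}{t}\Big)^{1/2}\Big\|_{L^p(\real^d;X)} \le C_{p,X}\,\|f\|_{L^p(\real^d;X)}
\]
holds whenever $\widehat\varphi$ satisfies the usual Hörmander-type conditions, because $X$ is a UMD lattice (this is exactly the mechanism used in \cite{D.K} to get dimension-free Fefferman–Stein inequalities; it relies on the lattice structure to pass from the scalar Rademacher/square-function estimate to the $X$-valued one, combined with Fubini on $\real^d\times\Omega$). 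The constant here depends on $p$, on the UMD/lattice constants of $X$, and on a fixed finite number of Hörmander norms of $\widehat\varphi$ — but \emph{not} on $d$ — provided those Hörmander norms stay bounded as $d\to\infty$. So the proof reduces to the statement: for $d \ge d_0(p,X)$, the multiplier $m(s) = \widehat{d\sigma_{d-1}}(s)$ and its dilations generate a square function whose relevant Hörmander norms are bounded uniformly in $d$. This uniform bound is the content of the Stein–Strömberg analysis of spherical means: the extra decay of $m$ coming from large $d$ more than compensates for any loss, which is precisely why a threshold $d_0(p,X)$ appears rather than the estimate holding for all $d$.

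The main obstacle is this last uniformity claim: verifying that the Bessel-function multiplier $s \mapsto s m'(s)$, together with the truncations needed to apply the UMD-lattice square function theorem, has Hörmander-type norms bounded independently of $d$. This requires the standard asymptotics of $J_\nu(s)$ for $\nu = (d-2)/2 \to \infty$ — uniform in $\nu$ bounds on $J_\nu(s)$ and its derivatives, e.g. $|J_\nu(s)| \lesssim s^{-1/2}$ for $s \gtrsim \nu$ together with the behavior for $s \lesssim \nu$ — and careful bookkeeping to see that the number of derivatives one needs in the Hörmander condition (which depends only on $p$ and on $X$ through its Boyd/UMD indices, hence is fixed) is eventually dominated by the decay rate $(d-2)/2$. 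I would organize this by choosing $d_0(p,X)$ so that $(d-2)/2$ exceeds the number of derivatives required by the $X$-valued Mikhlin–Hörmander multiplier theorem on UMD lattices, at which point the scalar estimates in \cite{SS} transfer directly. The reduction from $A_t$ to $S_t$ already done in the excerpt, combined with the Minkowski inequality step, then closes the argument for Theorem \ref{S-estimate}.
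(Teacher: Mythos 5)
There is a genuine gap, and it sits exactly at the point you flag as ``the main obstacle.'' Your whole plan rests on the claim that an $X$-valued Littlewood--Paley/H\"ormander square function theorem holds on $L^p(\real^d;X)$ with a constant depending only on $p$, the UMD constants of $X$, and finitely many H\"ormander norms of the multiplier --- \emph{not} on $d$ --- so that uniform-in-$d$ control of the Bessel multiplier $s\mapsto s\,m'(s)$ would finish the proof. No such dimension-free multiplier or square function theorem is available (nor is it claimed in \cite{D.K}, whose dimension-free result is for the maximal operator and is proved by semigroup methods, not by a Mikhlin--H\"ormander theorem). In the standard vector-valued Calder\'on--Zygmund route the required number of derivatives itself grows like $d/2$, and even for $C^\infty$ multipliers with uniformly bounded derivatives the constants produced by CZ theory and by the underlying maximal/extrapolation estimates grow with $d$. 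So ``Hörmander norms bounded uniformly in $d$'' does not yield a dimension-free $L^p(\real^d;X)$ bound, and your reduction collapses at precisely the step that makes the proposition nontrivial. The decay of the spherical multiplier for large $d$ gives, by Plancherel, only the $L^2$ estimate with good constants; converting that into an $L^p$, UMD-lattice-valued, dimension-free bound is the actual content of the proposition.

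For comparison, the paper's proof (following \cite[Proposition A.1]{J.M.E.B}) never invokes a multiplier theorem in dimension $d$. It embeds $S_t=S_t^0$ into the analytic family $S_t^\alpha f=f\ast K^\alpha_t$ with $K^\alpha(x)=\Gamma(\alpha)^{-1}(1-|x|^2)^{\alpha-1}\mathbf{1}_{|x|<1}$ and runs Stein interpolation on $\mathcal S^\alpha f=(e^{\alpha^2}t\frac{d}{dt}S^\alpha_t f)_{t>0}$: at $Re(\alpha)$ close to $(3-d)/2$ one has the $L^2$ bound \eqref{estimate for square function in L2} from Plancherel (this is where the large-$d$ Bessel decay you describe is actually used), while at $Re(\alpha)=3$ one has the UMD-lattice bound \eqref{estimate for square function in Lp}, obtained by viewing the square function as a Calder\'on--Zygmund operator with kernel valued in $H=L^2((0,\infty),dt/t)$, proving weighted $L^p(w)$ bounds for all $w\in A_p$ (cf. \cite{HH}) and upgrading them to $L^p(\real^d;X)$ via Lemma \ref{lattices-weighted lem} and the representation $X=[L^2(\mu),Y]_{\theta_0}$ from \cite{J.L.F}; these endpoint constants are allowed to depend on $d$. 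Interpolation then yields \eqref{inequality of square function in fixed dem} in a single fixed dimension $d_0(p,X)$, and the bound for all $d\ge d_0$ with the same constant follows by the Stein--Str\"omberg descent: write $S_t f=\int_{\mathcal O(d)}S^\rho_t f\,d\rho$ with $S^\rho_t$ a rotated copy of the $d_0$-dimensional spherical mean acting on the first $d_0$ coordinates, and apply Minkowski's inequality. Your proposal contains neither the analytic-family interpolation nor this rotation/averaging transference, and without them (or a genuinely new dimension-free multiplier theorem, which you would have to prove) the argument does not close.
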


Indeed, Proposition \ref{S-lem} is enough to prove Theorem \ref{S-estimate}. Since, for $0<d<d_0(p)$ we use the fact that Theorem \ref{S-estimate} holds with some $C_{p,X,d}>0$ (see \cite{GXHTM1}).

\medskip

We now prove Proposition \ref{S-lem}, 
which is similar to the proof of \cite[Proposition~A.1]{J.M.E.B} while the details are more delicate here.

We will take notations from \cite{J.M.E.B}.
First we consider
$$K^\alpha(x)=\left\{
                \begin{array}{ll}
                  \frac{1}{\Gamma(\alpha)}(1-|x|^2)^{\alpha-1}&,\qquad \hbox{for $|x|<1$,} \\
                 \qquad 0&, \qquad\hbox{for $|x|\geq1,$}
                \end{array}
              \right.$$
and denote its Fourier transform by
$$ m^{\alpha}(\xi):=(K^\alpha)^\wedge(\xi)=\pi^{-\alpha+1}|\xi|^{-d/2-\alpha+1}J_{d/2+\alpha-1}(2\pi|\xi|).$$
Here $J_v$ is the Bessel function of order $v$ and we have the estimate
\begin{equation}\label{estimate for kernel K}
  |m^{\alpha}(\xi)|+|\nabla m^\alpha(\xi)|\leq C_{d,Re(\alpha)}\min(1,|\xi|^{-d/2+1/2-Re(\alpha)}).
\end{equation}
$K^{\alpha}$ is analytic in $\alpha$
and our particular interest is the case $\alpha=0$.%

For $t>0$ we define $K_t^\alpha(x)=t^{-d}K^\alpha(x/t)$, which implies that $(K_t^\alpha)^\wedge(\xi)=m^{\alpha}(t\xi)$.
Let $S_t^\alpha f(x,\omega)=f(\cdot,\omega)\ast K^{\alpha}_t(x)$ and we see $S_t^0=S_t$. One verifies from (\ref{estimate for kernel K}) that
$$\int_0^\8|t\frac{d}{dt}m^{\alpha}(t\xi)|^2\frac{dt}{t}\leq C_{d,Re(\alpha)},$$
for $Re(\alpha)>\frac{3-d}{2}$. Using the Plancherel theorem we have
\begin{equation}\label{estimate for square function in L2}
  \|{\big(\int^{\8}_0|t\frac{d}{dt}S^\alpha_tf|^2\frac{dt}{t}\big)^{\frac{1}{2}}}\|_{L^2(\real^d;L^2(\mu))}\leq C_{d,Re(\alpha)}\norm{f}_{L^2(\real^d;L^2(\mu))}.
\end{equation}
We claim that, if $Re(\alpha)=3$, $X$ is a UMD lattice, and  $1<p<\8$, then
\begin{equation}\label{estimate for square function in Lp}
  \|{\big(\int^{\8}_0|t\frac{d}{dt}S^\alpha_tf|^2\frac{dt}{t}\big)^{\frac{1}{2}}}\|_{L^p(\real^d;X)}\leq \frac{C_{d,p,X}}{\Gamma(\alpha)}\norm{f}_{L^p(\real^d;X)}.
\end{equation}%

We remark again that all these notations and estimates in the scalar form are already contained in \cite{J.M.E.B}.

\medskip

It is well-known that Banach lattice-valued inequalities are closely related to the weighted norm inequalities in harmonic analysis. Rubio de Francia \cite{J.L.F} says that the UMD lattice-valued inequality can be deduced from the weighted norm inequality. 

Let $A_p$ denote the Muckenhoupt class of weights on $\real^d$ and $L^p(w)$ denote the weighted $L^p$ spaces (see e.g. \cite{J.D} for the detailed definition of the weight classes and the weighted Lebesgue space). 

The following lemma is cited from \cite{GXHTM1}.
\begin{lem}\label{lattices-weighted lem}
Let $X$ be a UMD lattice, and let $S$ be a sublinear operator which is bounded on $L^p(\real^d,w)$ for all $w\in A_p$ with $1<p<1+\varepsilon$ for some $\varepsilon>0$. Then $\tilde{S}f(x,\omega)=S(f(\cdot,\omega))(x)$ is bounded on $L^p(\real^d;X)$ for all $1<p<\8$.
\end{lem}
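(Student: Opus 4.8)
The plan is to view Lemma~\ref{lattices-weighted lem} as an instance of Rubio de Francia's transference from weighted norm inequalities to $X$-valued inequalities \cite{J.L.F}, the proof combining the scalar weighted theory with the boundedness of the Hardy--Littlewood maximal operator on $L^p(\real^d;X)$.

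First I would strengthen the hypothesis: fixing one $p_0\in(1,1+\varepsilon)$, the assumed boundedness of $S$ on $L^{p_0}(\real^d,w)$ for all $w\in A_{p_0}$ yields, via the Rubio de Francia extrapolation theorem (see, e.g., \cite{J.D}), boundedness of $S$ on $L^p(\real^d,w)$ for \emph{every} $1<p<\8$ and every $w\in A_p$, with operator norm dominated by an increasing function $\Phi_p$ of $[w]_{A_p}$ alone (that $\Phi_p$ may depend on the dimension is harmless here). Then I fix $1<p<\8$; after a routine density reduction one may assume each slice $f(\cdot,\omega)$ is a well-behaved function, and I set $u=|f|\in L^p(\real^d;X)$, so $\norm{u}_{L^p(\real^d;X)}=\norm{f}_{L^p(\real^d;X)}$. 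Identifying $L^p(\real^d;X)^*$ with $L^{p'}(\real^d;X^*)$ (recall that $X^*$ is again a UMD lattice) through the pairing $\int_{\real^d}\int_\Omega(\cdot)\,dx\,d\mu$, duality gives
$$\norm{\tilde Sf}_{L^p(\real^d;X)}=\sup_g\int_\Omega\int_{\real^d}|S(f(\cdot,\omega))(x)|\,g(x,\omega)\,dx\,d\mu(\omega),$$
the supremum being over all $0\le g\in L^{p'}(\real^d;X^*)$ with $\norm{g}_{L^{p'}(\real^d;X^*)}\le1$.

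The core is a double application of the Rubio de Francia iteration, and the one genuinely deep ingredient is that, $X$ (hence $X^*$) being a UMD lattice, the maximal operator $\tilde Mh(x,\omega):=M(h(\cdot,\omega))(x)$ is bounded on $L^p(\real^d;X)$ and on $L^{p'}(\real^d;X^*)$ --- the Fefferman--Stein inequality for UMD lattices (see \cite{D.K} and the references therein). Applying the operator $\mathcal Rh:=\sum_{k\ge0}(2\norm{\tilde M}_{L^{p'}(X^*)})^{-k}\tilde M^kh$ to $g$ produces $W\ge g$ with $\norm{W}_{L^{p'}(X^*)}\le2$ such that for a.e.\ $\omega$ the function $x\mapsto W(x,\omega)$ is an $A_1$ weight with $[W(\cdot,\omega)]_{A_1}$ bounded uniformly in $\omega$; the analogous iteration on $L^p(\real^d;X)$ applied to $u$ produces $U\ge u$ with $\norm{U}_{L^p(X)}\le2\norm{f}_{L^p(X)}$ and $[U(\cdot,\omega)]_{A_1}$ uniformly bounded. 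By the elementary half of Jones' factorization theorem, $\nu_\omega:=W(\cdot,\omega)\,U(\cdot,\omega)^{1-p}\in A_p$ with $[\nu_\omega]_{A_p}\le K$, $K$ independent of $\omega$. Now for a.e.\ $\omega$, Hölder's inequality on $\real^d$ combined with the exponent identities $p'/p=p'-1$ and $(p-1)(p'-1)=1$ yields
$$\int_{\real^d}|S(f(\cdot,\omega))(x)|\,W(x,\omega)\,dx\le\Big(\int_{\real^d}|S(f(\cdot,\omega))|^p\nu_\omega\,dx\Big)^{1/p}\Big(\int_{\real^d}W(x,\omega)U(x,\omega)\,dx\Big)^{1/p'},$$
and the weighted bound bounds the first factor by $\Phi_p(K)\big(\int_{\real^d}|f(\cdot,\omega)|^p\nu_\omega\,dx\big)^{1/p}$; since $|f|=u\le U$ one has $|f|^p\nu_\omega\le U^pW U^{1-p}=WU$, so the right-hand side is $\le\Phi_p(K)\int_{\real^d}W(x,\omega)U(x,\omega)\,dx$. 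Using $g\le W$, integrating in $\omega$, and applying the $X$--$X^*$ pairing followed by Hölder in $x$,
$$\norm{\tilde Sf}_{L^p(\real^d;X)}\le\Phi_p(K)\int_{\real^d}\langle U(x),W(x)\rangle\,dx\le\Phi_p(K)\norm{U}_{L^p(X)}\norm{W}_{L^{p'}(X^*)}\le4\,\Phi_p(K)\norm{f}_{L^p(\real^d;X)}.$$

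The obstacles I expect are conceptual rather than computational. The first is invoking the Fefferman--Stein inequality for the UMD lattice $X$ and its dual: this is precisely where the UMD hypothesis on the lattice is consumed, and it is itself a substantial (albeit standard) theorem. The second is the measure-theoretic bookkeeping --- justifying the duality $L^p(\real^d;X)^*=L^{p'}(\real^d;X^*)$, the convergence of $\mathcal R$ both in norm and pointwise a.e.\ on $\real^d\times\Omega$, and the fact that for a.e.\ $\omega$ the slices $W(\cdot,\omega)$, $U(\cdot,\omega)$, $\nu_\omega$ are genuine locally integrable weights with uniformly controlled $A_1$ (resp.\ $A_p$) constants. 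The latter is handled by the preliminary reduction to $f\in\mathcal S(\real^d)\otimes X$, for which every $\omega$-slice is a finite linear combination of Schwartz functions and all the pointwise manipulations are legitimate.
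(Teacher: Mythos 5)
Your argument is correct, but note that the paper itself does not prove this lemma at all: it simply cites it from \cite{GXHTM1}, which in turn rests on Rubio de Francia's transference principle \cite{J.L.F}. What you have written is essentially a faithful reconstruction of that underlying classical proof: extrapolate the hypothesis from one $p_0\in(1,1+\varepsilon)$ to the target exponent, dualize via $L^p(\real^d;X)^*=L^{p'}(\real^d;X^*)$ (legitimate since UMD lattices are reflexive and order continuous, so the Banach dual is the K\"othe dual and is again a UMD lattice), run the Rubio de Francia iteration on both sides using the Bourgain--Rubio de Francia lattice maximal theorem (the dimension-free version \cite{D.K} is more than is needed here, since this lemma makes no dimension-free claim), factor $\nu_\omega=W(\cdot,\omega)U(\cdot,\omega)^{1-p}\in A_p$ with $[\nu_\omega]_{A_p}$ uniform in $\omega$, and close with the H\"older/pairing computation, whose exponent bookkeeping ($W^{p'}\nu_\omega^{-p'/p}=WU$ and $|f|^p\nu_\omega\le WU$) is right. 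Two small points deserve a sentence each in a complete write-up: first, the extrapolation step tacitly uses that the weighted operator norms depend only on $[w]_{A_{p_0}}$, which is the intended reading of the hypothesis (and is what the paper's application supplies); second, on slices where $f(\cdot,\omega)\equiv 0$ the factor $U(\cdot,\omega)^{1-p}$ is not defined, so one should either discard those $\omega$ (using $S(0)=0$ for a positively homogeneous sublinear $S$) or perturb $u$ by a strictly positive function --- a routine fix, consistent with the measure-theoretic caveats you already flag. So the proposal fills in, correctly and by the standard route, a proof the paper leaves to the literature.
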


To prove (\ref{estimate for square function in Lp}), by Lemma \ref{lattices-weighted lem}, it is enough to prove the following weighted norm inequality
\begin{equation}\label{weighted norm inequality for vector-valued CZ opator}
  \|{{\big(\int^{\8}_0|t\frac{d}{dt}S^\alpha_tf|^2\frac{dt}{t}\big)^{\frac{1}{2}}}}\|_{L^p(w)}\leq
  \frac{C_{d,p}}{\Gamma(\alpha)}\norm{f}_{L^p(w)},~\forall~w\in A_p.
\end{equation}
To prove (\ref{weighted norm inequality for vector-valued CZ opator}), we redefine the left hand side of (\ref{weighted norm inequality for vector-valued CZ opator}).
Set
$$Tf=f\ast \mathcal{K},\qquad where\qquad \bigg(\mathcal{K}(x)=(t\frac{d}{dt}K^\alpha_t(x)):t>0\bigg)$$
as an operator $T$ whose kernel is valued in the Hilbert space $H=L^2\big((0,\8),\frac{dt}{t}\big)$. 
Then
$$ \|{{\big(\int^{\8}_0|t\frac{d}{dt}S^\alpha_tf|^2\frac{dt}{t}\big)^{\frac{1}{2}}}}\|_{L^p(w)}=\norm{\|Tf\|_{H}}_{L^p(w)}$$

We check that $T$ is a vector-valued Calder\'{o}n-Zygmund operator. By (\ref{estimate for kernel K}), we can see that $T$ is bounded in $L^2(\real^d)$. According to the definition of $K^{\alpha}_t$ and $Re(\alpha)=3$, by a straightforward computation we get
$$\norm{\mathcal{K}(x)}_{\mathbb{C}\rightarrow H}\leq \frac{C_d}{|\Gamma(\alpha)|}|x|^{-d},\qquad \norm{\nabla_x\mathcal{K}(x)}_{\mathbb{C}\rightarrow H}\leq\frac{C_d}{|\Gamma(\alpha)|}|x|^{-d-1}.$$
By the weighted norm inequalities for square functions (cf. \cite{HH}), (\ref{estimate for square function in Lp}) is proved.

\medskip

Next fix a $p$,  $1<p<\8$. We will prove that there exists a $d_0$ depending on $p$ and $X$ such that
\begin{equation}\label{inequality of square function in fixed dem}
 \|{\big(\int^{\8}_0|t\frac{d}{dt}S_tf|^2\frac{dt}{t}\big)^{\frac{1}{2}}}\|_{L^p(\real^{d_0};X)}\leq C_{p,X}\norm{f}_{L^p(\real^{d_0};X)}.
\end{equation}

For UMD lattice $X$ in $(\Omega, \mu)$, there exists $\theta_0$, $0<\theta_0<1$, and another UMD lattice $Y$, such that $X=[L^2(\mu),Y]_{\theta_0}$ (see \cite[page 251]{J.L.F}). For vector-valued interpolation, the reader can see \cite{BL} for more details.%


Similar to \cite{J.M.E.B}, we will apply vector-valued complex interpolation to the analytic family of operators
$$ \mathcal{S}^{\alpha}f=(e^{\alpha^2}t\frac{d}{dt}S_t^{\alpha} f:t>0).$$
Recall that we have obtained from  (\ref{estimate for square function in L2}) and (\ref{estimate for square function in Lp}) that
\begin{equation*}
  \begin{split}
  & \norm{\mathcal{S}^\alpha}_{L^2(\real^d;L^2(H))\rightarrow L^2(\real^d;L^2(\mu))}\leq C_{d,\varepsilon}\quad \textit{for}\quad Re(\alpha)=(3-d)/2+\varepsilon,\\
  & \norm{\mathcal{S}^\alpha}_{L^{2}(\real^d;Y(H))\rightarrow L^{2}(\real^d;Y)}\leq C_{d,X}\qquad \textit{for}\quad Re(\alpha)=3.
   \end{split}
\end{equation*}
Here and in the sequel, the inequality $\|\mathcal{S}^\alpha\|_{A\to B}\le C$ means $\|\mathcal{S}^\alpha f\|_{A}\le C\|f\|_{B}$.  For the $\theta_0$, and a small number $\varepsilon>0$ we have
\begin{equation}\label{L^q inequality}
 \|{\mathcal{S}^{\alpha}}\|_{L^{2}(\real^d;X(H))\rightarrow L^{2}(\real^d;X)}\leq C_{d,\varepsilon,X},
\end{equation}
where
\begin{align*}
 &Re(\alpha)=(1-\theta_0)((3-d)/2+\varepsilon)+3\theta_0.
\end{align*}

We are ready to prove (\ref{inequality of square function in fixed dem}). Assume first that $1<p<2$. Take $1<q<2$, where $q$ is close to 1 and write ${1}/{p}={(1-\theta)}/{2}+{\theta}/{q}$, where $0<\theta<1$. The quantities $q$, $\theta$ and $\varepsilon$ are to be determined. We use again vector-valued complex interpolation for the analytic family of operators
$$ \mathcal{S}^{\alpha}f=(e^{\alpha^2}t\frac{t}{dt}S_t^{\alpha} f:t>0).$$
By (\ref{estimate for square function in Lp}) and (\ref{L^q inequality}), we have
\begin{equation}\label{interpolation equalities}
  \begin{split}
  & \norm{\mathcal{S}^\alpha}_{L^2(\real^d;X(H))\rightarrow L^2(\real^d;X)}\leq C_{d,\varepsilon,X},~  Re(\alpha)=(1-\theta_0)((3-d)/2+\varepsilon)+3\theta_0,\\
   & \|{\mathcal{S}^{\alpha}}\|_{L^{q}(\real^d;X(H))\rightarrow L^{q}(\real^d;X)}\leq C_{d,X},\quad Re(\alpha)=3.\\
   \end{split}
\end{equation}

Let $\varepsilon>0$
be the number such that
\begin{align*}
 &(1-\theta)\{(1-\theta_0)((3-d)/2+\varepsilon)+3\theta_0\}+3\theta=0.
\end{align*}
Then
\begin{equation}\label{interpolation equality}
 p=((1-\theta)/2+\theta/q)^{-1},~p>\frac{(1-\theta_0)(d+3)}{(d-3-(d+3)\theta_0)/q+3}.
\end{equation}
This combined with a standard complex interpolation implies that (\ref{inequality of square function in fixed dem}) holds when
$$d_0=d_0(p)>\frac{3}{(1-\theta_0)(p-1)}+\frac{3\theta_0}{1-\theta_0}.$$
A similar argument for the case $p\ge 2$ works.
In summary
we are able to prove (\ref{inequality of square function in fixed dem})  for $d_0=d_0(p)=[\max\{\frac{3}{(1-\theta_0)(p-1)}+\frac{3\theta_0}{1-\theta_0},\frac{3}{(1-\theta_0)(p^{\prime}-1)}+\frac{3\theta_0}{1-\theta_0}\}]$, where $[a]$ denotes the largest integer $\le a$.
We refer the readers to \cite[P97]{J.M.E.B} for more details about this part.
Moreover if we follow the argument on \cite[P97]{J.M.E.B} by replacing the scale-valued functions by vector-valued functions,
we will finish the proof of Proposition~\ref{S-lem}.

\medskip
\bigskip
\section{the estimate of the long variation}\label{ST3}

In this section we estimate the long variation $\mathcal{LV}_{q}(\mathcal{A})f$.
Recall that
$$\mathcal{LV}_{q}(\mathcal{A})f= V_q(A_tf(x,\omega):t\in\{2^k:k\in \mathbb{Z}\}).$$
We prove the following theorem.
\begin{thm}\label{L-estimate}
Let $X$ be a UMD lattice. Let $1<p<\8$, then there exists a constant $C_{p,X}>0$ independent of dimension $d$ such that
\begin{equation}\label{L-inequalities}
\norm{\mathcal{LV}_q(\mathcal{A})f}_{L^p(\real^d;X)}\leq C_{p,q,X}\norm{f}_{L^p(\real^d;X)},\quad\forall f\in L^p(\real^d;X).
\end{equation}
\end{thm}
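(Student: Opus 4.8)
The plan is to compare the dyadic ball averages $A_{2^k}$ with a symmetric diffusion semigroup, for which a dimension-free UMD-lattice valued $q$-variational inequality is already known, and to absorb the discrepancy into a dimension-free square function. Let $\{P_s=e^{s\Delta}\}_{s>0}$ be the heat semigroup on $\real^d$, with Fourier multiplier $e^{-4\pi^2 s|\xi|^2}$; it matches the ball multiplier $m^B(t\xi)$, where $m^B=(|B_1|^{-1}\mathbf 1_{B_1})^\wedge$, to second order at the origin. Using the subadditivity of the seminorm $V_q$ one splits (suppressing the variables $(x,\omega)$)
$$\mathcal{LV}_q(\mathcal A)f\ \le\ V_q\big(P_{2^{2k}}f:k\in\mathbb Z\big)\ +\ V_q\big((A_{2^k}-P_{2^{2k}})f:k\in\mathbb Z\big),$$
and treats the two pieces separately.

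For the semigroup piece I would invoke the $q$-variational inequality for symmetric diffusion semigroups on $L^p$-spaces of UMD lattices proved by the second author and Ma \cite{GXHTM1}: for $2<q<\8$ and $1<p<\8$ one has $\norm{V_q(T_sg:s>0)}_{L^p(Y)}\le C_{p,q,Y}\norm{g}_{L^p(Y)}$ for every symmetric diffusion semigroup $\{T_s\}_{s>0}$ and every UMD lattice $Y$, with a constant coming from the abstract theory and hence independent of the particular semigroup. Since $\{e^{s\Delta}\}_{s>0}$ is such a semigroup on $\real^d$ for every $d$, restricting the supremum to the times $s\in\{2^{2k}:k\in\mathbb Z\}$ gives $\norm{V_q(P_{2^{2k}}f:k)}_{L^p(\real^d;X)}\le C_{p,q,X}\norm{f}_{L^p(\real^d;X)}$ with a constant independent of $d$. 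This is exactly where the hypothesis $q>2$ enters, being the sharp range for variations of symmetric diffusion semigroups.

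For the error piece I use $q>2$ to pass to the $\ell^2$-variation, $V_q\le V_2$, together with the elementary inequality $V_2\big((g_k)_{k\in\mathbb Z}\big)\le 2\big(\sum_k|g_k|^2\big)^{1/2}$, applicable with $g_k=(A_{2^k}-P_{2^{2k}})f$ since for $f\in\mathcal S(\real^d)\otimes X$ one has $A_{2^k}f\to f$ and $P_{2^{2k}}f\to f$ as $k\to-\8$ with a quadratic rate (the multiplier $m^B(2^k\xi)-e^{-4\pi^2 2^{2k}|\xi|^2}$ is $O((2^k|\xi|)^2)$ near $0$), while both tend to $0$ rapidly as $k\to+\8$, so $\sum_k|g_k|^2<\8$ pointwise. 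Matters then reduce to the dimension-free square-function estimate
$$\big\|\big(\textstyle\sum_{k\in\mathbb Z}|(A_{2^k}-P_{2^{2k}})f|^2\big)^{1/2}\big\|_{L^p(\real^d;X)}\ \le\ C_{p,X}\norm{f}_{L^p(\real^d;X)}.$$
Its $L^2(\real^d;L^2(\mu))$ version follows from the Plancherel theorem once $\sum_{k\in\mathbb Z}|m^B(2^k\xi)-e^{-4\pi^2 2^{2k}|\xi|^2}|^2\le C$ uniformly in $\xi$ and in $d$; the small-frequency part of this sum is controlled by the absolute bounds $|m^B(\xi)-1|\lesssim|\xi|^2$ and $|e^{-c|\xi|^2}-1|\lesssim|\xi|^2$, while the intermediate- and high-frequency parts rely on the delicate comparison between the Bessel-type multiplier $m^B$ and the Gaussian, of the kind established in \cite{J.M.E.B}. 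To upgrade to $L^p(\real^d;X)$ I would run the scheme of Section \ref{ST2}: realize the square function as the $H$-norm of a vector-valued Calder\'on--Zygmund operator, introduce the associated analytic family of smoothed operators via the kernels $K^\alpha$, apply Lemma \ref{lattices-weighted lem} and the weighted Calder\'on--Zygmund estimates at $\mathrm{Re}(\alpha)=3$, interpolate with the $L^2$-bound valid down to $\mathrm{Re}(\alpha)$ near $(3-d)/2$, obtain the claim first for all $d\ge d_0(p,X)$, and then remove this restriction by Stein's rotation-averaging trick (expressing the relevant operator as an average over $\mathcal O(d)$ of operators acting on a fixed number of variables). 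The finitely many dimensions $d<d_0(p,X)$ are absorbed into the constant using the ($d$-dependent) estimate of \cite{GXHTM1}. Adding the two contributions gives (\ref{L-inequalities}).

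The step I expect to be the main obstacle is the dimension-free square-function estimate for the error $A_{2^k}-P_{2^{2k}}$, and inside it the uniform bound on $\sum_k|m^B(2^k\xi)-e^{-4\pi^2 2^{2k}|\xi|^2}|^2$: the ball multiplier and the Gaussian are each of size comparable to $1$ on a band of frequencies whose length grows with the dimension, so the two terms cannot be estimated separately and one must exploit genuine cancellation between them (Bessel asymptotics in the regime where the order and the argument are comparable); one then has to make sure that this dimension-freeness is not lost in the complex-interpolation-plus-lifting machinery needed to pass from $L^2$ to $L^p(\real^d;X)$.
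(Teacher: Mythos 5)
Your overall skeleton (variation of a semigroup along the dyadic times plus a square function of the differences, with the semigroup term handled by the UMD-lattice variational inequality of \cite{GXHTM1}) is the same as the paper's, but the specific comparison you choose breaks the dimension-free character at exactly the point you flag, and no amount of cancellation can repair it. Pairing the radius-$2^k$ ball average with the heat semigroup at the naive parabolic time $2^{2k}$ mismatches the scales by a factor $\sqrt d$: the normalized ball multiplier satisfies $m^B(\xi)=1-\tfrac{2\pi^2}{d+2}|\xi|^2+\dots$ and stays bounded away from $0$ for $|\xi|\lesssim\sqrt d$ (its one-dimensional marginal is essentially a Gaussian of variance $1/d$), whereas $e^{-4\pi^2 2^{2k}|\xi|^2}$ has already died for $2^k|\xi|\gtrsim 1$. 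Hence for each fixed $\xi$ there are about $\tfrac12\log_2 d$ values of $k$ with $1\le 2^k|\xi|\le\sqrt d/10$ on which $|m^B(2^k\xi)-e^{-4\pi^2 2^{2k}|\xi|^2}|\ge 1/4$, so $\sum_k|m^B(2^k\xi)-e^{-4\pi^2 2^{2k}|\xi|^2}|^2\gtrsim\log d$ and even the $L^2$ step of your error estimate fails to be uniform in $d$. This is not a question of exploiting Bessel oscillation (on that band the ball multiplier is close to $1$, not oscillating); the comparison time must be recalibrated by the dimension. That is what the paper does, following \cite{Bour1} and \cite{J.M.E.B}: normalize $|B|=1$ and compare $A_{2^n}$ with the \emph{Poisson} semigroup at times $L2^n$, where $L$ is the isotropic constant of \eqref{isotropic constant L}, so that $|m(\xi)-1|\le CL|\xi|$ and $|m(\xi)|\le C(L|\xi|)^{-1}$ hold with absolute constants (Proposition \ref{constant m}) and $|m(2^n\xi)-p_{L2^n}(\xi)|\le C\min\{L2^n|\xi|,(L2^n|\xi|)^{-1}\}$, which gives the uniform dyadic summability and, against Littlewood--Paley pieces, the decay $2^{-|j|/2}$ of \eqref{L-inequalities 3}. (Keeping the heat semigroup would require running it at times of order $2^{2k}/d$, not $2^{2k}$.)

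A second, independent problem is your plan for upgrading the error square function from $L^2$ to $L^p(\real^d;X)$ by rerunning the Section \ref{ST2} machinery (kernels $K^\alpha$, weighted Calder\'on--Zygmund bounds, interpolation, rotation averaging): the weighted CZ constants there are dimension-dependent, and the device that removes the dimension in Section \ref{ST2} --- writing the spherical average as an $\mathcal O(d)$-average of operators acting on a fixed block of $d_0$ coordinates --- is special to spherical means and has no analogue for $A_{2^k}-P_s$. The paper's route is different and is what makes the constants dimension-free: decompose $f=\sum_j W_{j+n}f$ with $W_n=P_{L2^n}-P_{L2^{n-1}}$, prove the $j$-uniform dimension-free bounds \eqref{L-inequalities 2} from the lattice-valued maximal inequalities of Deleaval--Kriegler \cite{D.K} and Xu \cite{Xu} together with the lattice $g$-function estimate of \cite{GXHTM1} (Lemma \ref{Littlewood-paley inequalities}), and then interpolate these with the decaying $L^2$ bound \eqref{L-inequalities 3} using $X=[L^2(\mu),Y]_{\theta_0}$. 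Your treatment of the semigroup term is essentially the paper's (with heat in place of Poisson), but without the isotropic-constant calibration and this interpolation-with-decay scheme the error term is not controlled uniformly in $d$.
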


Before proving the theorem, we need some notations and lemmas. 

Let $P_t$ be the Poisson semigroup, and denote $P_tf(x,\omega)=P_t\ast f(\cdot,\omega)(x)$. Then for every $\xi\in\real^d$, we have $(P_tf)^\wedge(\xi,\omega)=p_t(\xi)\hat{f}(\cdot,\omega)(\xi)$, where $p_t(\xi)=e^{-2\pi t|\xi|}$.%

For $f\in L^p(\real^d;X)$, let us introduce the maximal function
$$P^{\ast}f(x,\omega)=\sup_{t>0}|P_tf|(x,\omega),$$
and the square function
$$g(f)(x,\omega)=\bigg(\int_0^\8t|\frac{d}{dt}P_tf(x,\omega)|^2dt\bigg)^{1/2},$$
associated with the Poisson semigroup.%

According to \cite[Theorem 1, p. 46]{E.M.S}, we know that Poisson semigroup $P_t$ is a contractively regular operator, with $1<p<\8$, and $P_t$ is strong continuous and analytic.

To prove Theorem \ref{L-estimate}, we need some lemmas. 

In \cite{GXHTM1}, Hong and Ma establish UMD lattice-valued square functions and variational inequalities for analytic semigroups. Here we state these results for the Possion semigroup.
\begin{lem}\label{g function estimate}
Let $X$ be any UMD lattice. For every $1<p<\8$ there exists a constant $C_{p,X}>0$ independent of the dimension such that for every $f\in L^p(\real^d;X)$ we have
\begin{equation}\label{square function associate possion function}
  \|{\big(\int_0^\8t|\frac{d}{dt}P_tf(x,\omega)|^2dt\big)^{1/2}}\|_{L^p(\real^d;X)}
\leq C_{p,X}\norm{f(x,\omega)}_{L^p(\real^d;X)}
\end{equation}
\end{lem}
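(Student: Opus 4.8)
The plan is to derive Lemma~\ref{g function estimate} as the special case, for the Poisson semigroup $(P_t)_{t>0}$, of the UMD-lattice-valued Littlewood--Paley--Stein square function estimate for analytic semigroups established by Hong and Ma in \cite{GXHTM1}; the only extra work is to verify that the constant furnished by that theorem does not depend on $d$. To this end, recall that $(P_t)_{t>0}=(e^{-2\pi t\sqrt{-\Delta}})_{t>0}$ is a symmetric diffusion semigroup in Stein's sense: it is positivity preserving, a contraction on $L^1(\real^d)$ and on $L^\8(\real^d)$ (hence on every $L^p(\real^d)$, $1\le p\le\8$), self-adjoint on $L^2(\real^d)$, and $P_t1=1$. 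As already recorded after the statement of Theorem~\ref{L-estimate} (citing \cite[Theorem~1, p.~46]{E.M.S}), $(P_t)$ is therefore contractively regular with constant $1$, strongly continuous, and analytic on $L^p(\real^d)$ for $1<p<\8$, and by Stein's theory of symmetric diffusion semigroups the analyticity sector and the accompanying uniform bound depend only on $p$, not on $d$. Tensoring with the identity of the UMD lattice $X$ preserves contractive regularity and analyticity, so $(P_t)$ extends to a bounded analytic, contractively regular, strongly continuous semigroup on $L^p(\real^d;X)\cong L^p(\real^d\times\Omega)$, a space which is itself a UMD lattice whose UMD constant is controlled by that of $X$ and by $p$ alone, hence is independent of $d$.

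Feeding these data into the square function theorem of \cite{GXHTM1} applied to $(P_t)$ on $L^p(\real^d;X)$ yields precisely the inequality \eqref{square function associate possion function}, with a constant $C_{p,X}$ that depends only on $p$, the UMD constant of $X$, the regularity constant, and the analyticity sector of $(P_t)$. By the previous paragraph all of these are dimension-free, and the lemma follows.

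The point requiring the most care is the bookkeeping that makes the constant dimension-free, not the existence of the bound itself. One has to confirm that the proof of the abstract theorem decomposes into (i) an $L^2(\real^d;L^2(\mu))$ bound for $g(f)$, immediate from the spectral theorem and manifestly $d$-independent, and (ii) the passage from $L^2$ to $L^p(\real^d;X)$, which via Rubio de Francia's transference (Lemma~\ref{lattices-weighted lem}) reduces to the \emph{scalar} weighted estimate $\|g(f)\|_{L^p(w)}\le C_{p,[w]_{A_p}}\|f\|_{L^p(w)}$ for $w\in A_p$ --- a bound supplied by Stein's Littlewood--Paley theory for the Poisson semigroup together with a Fefferman--Stein sharp-maximal (good-$\lambda$) argument, with constant depending only on $p$ and $[w]_{A_p}$. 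Alternatively, and more structurally, one may argue directly on $L^p(\real^d;X)$: the negative generator $\sqrt{-\Delta}\otimes\mathrm{id}_X$ is $R$-sectorial and admits a bounded $H^\infty$-functional calculus on the UMD space $L^p(\real^d;X)$ with angle and bound governed solely by $p$, the UMD constant of $X$, and the regularity and sectoriality constants of $(P_t)$, and \eqref{square function associate possion function} is then the standard square-function norm equivalence attached to a bounded $H^\infty$-calculus. Either way, the essential mechanism is that every quantity entering the final constant is insensitive to the dimension.
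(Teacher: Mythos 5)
Your proposal is correct and follows essentially the same route as the paper: the paper proves nothing beyond citing the UMD-lattice-valued square function estimate for analytic semigroups from \cite{GXHTM1}, applied to the Poisson semigroup, whose contractive regularity, strong continuity and analyticity (via \cite[Theorem 1, p.~46]{E.M.S}) carry dimension-free constants. Your additional bookkeeping of why each constant is independent of $d$ is exactly the implicit content of the paper's citation, so no further comment is needed.
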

\begin{lem}\label{Poisson semigroup q-variation}
Let $X$ be any UMD lattice. For every $1<p<\8$, there exists a constant $C_{p,q,X}$ independent of the dimension such that for every $f\in L^p(\real^d;X)$ we have
$$\norm{V_q(P_tf:t>0)}_{L^p(\real^d;X)}\leq C_{p,q,X}\norm{f}_{L^p(\real^d;X)}.$$
\end{lem}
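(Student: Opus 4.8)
The plan is to deduce Lemma~\ref{Poisson semigroup q-variation} from the abstract UMD-lattice-valued $q$-variational inequality for (analytic, symmetric diffusion) semigroups established by Hong and Ma in \cite{GXHTM1}, the only real work being to check that, when that result is specialized to the Poisson semigroup on $\real^d$, every constant that appears can be taken independent of $d$.

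First I would isolate the precise abstract statement I want from \cite{GXHTM1}: if $(T_t)_{t>0}$ is a strongly continuous, symmetric, positive contraction semigroup on the scale $L^p(\Omega_0,\nu)$ which is moreover contractively regular and bounded analytic on $L^p$, then for every UMD lattice $X$, every $1<p<\infty$ and every $q>2$ there is a constant $C$, depending only on $p$, $q$, $X$, on the regularity constant of $(T_t)$ and on its angle of analyticity --- but \emph{not} on the underlying measure space $(\Omega_0,\nu)$ --- such that $\norm{V_q(T_tg:t>0)}_{L^p(\Omega_0;X)}\le C\norm{g}_{L^p(\Omega_0;X)}$. The point to stress is that the constant there is purely structural: the proof proceeds through abstract square function and Cotlar-type estimates attached to the generator of $(T_t)$ and never uses anything beyond the listed hypotheses, so it cannot see the dimension of $\Omega_0$ when $\Omega_0=\real^d$.

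Next I would verify that the Poisson semigroup $(P_t)_{t>0}$ satisfies these hypotheses on $\real^d$ with constants independent of $d$. It is the convolution semigroup against the probability densities $p_t$, hence positive and an $L^p$-contraction for all $1\le p\le\infty$, and self-adjoint on $L^2(\real^d)$ since $p_t$ is even; strong continuity is standard. As recorded in the text above via \cite[Theorem~1, p.~46]{E.M.S}, $(P_t)$ is contractively regular (regularity constant $1$), strongly continuous, and analytic on $L^p(\real^d)$ for $1<p<\infty$; the analyticity --- with an angle one can take universal --- also follows from Bochner subordination $P_t=\int_0^\infty \eta_t(s)\,H_s\,ds$, $H_s$ the heat semigroup and $\eta_t$ the one-sided $\tfrac12$-stable density, together with the classical fact that a semigroup subordinated to a contraction semigroup is automatically bounded analytic on $L^p$ with an absolute angle. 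Thus the contraction bound, the regularity constant, and the analyticity angle of $(P_t)$ are all dimension-free.

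Combining these two observations with $\Omega_0=\real^d$ yields the lemma. I do not anticipate a genuine obstacle; the one point demanding care is the bookkeeping in the first step, namely confirming by inspection of the argument of \cite{GXHTM1} that its constant is genuinely structural and in no way absorbs the dimension $d$ of the space on which the Poisson semigroup acts --- which is precisely the feature we need and which the abstract, semigroup-theoretic character of that proof delivers.
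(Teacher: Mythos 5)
Your proposal is correct and follows essentially the same route as the paper: the paper obtains this lemma precisely by specializing the UMD-lattice-valued $q$-variational inequality for analytic semigroups of Hong and Ma \cite{GXHTM1} to the Poisson semigroup, using (as you do, via \cite[Theorem~1, p.~46]{E.M.S}) that $(P_t)_{t>0}$ is a contractively regular, strongly continuous, analytic semigroup, so that the resulting constant depends only on $p$, $q$, $X$ and not on $d$. Your additional remarks on dimension-free regularity and the subordination argument for the analyticity angle only make explicit the bookkeeping the paper leaves implicit.
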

In \cite{Xu}, Xu establishes UMD lattice-valued maximal functions for analytic semigroups. Here we state the result for the Possion semigroup.
\begin{lem}\label{maximal inequality}
Let $X$ be any UMD lattice. For every $1<p<\8$, there exists a constant $C_{p,X}$ independent of the dimension such that for every $f\in L^p(\real^d;X)$ we have
$$\norm{P^*f}_{L^p(\real^d;X)}\leq C_{p,X}\norm{f}_{L^p(\real^d;X)}.$$
\end{lem}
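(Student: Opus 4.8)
The plan is to recognize $(P_t)_{t>0}$ as a semigroup covered by an existing abstract UMD-lattice maximal theorem and then to check that the resulting constant carries no dimension. As recalled just above (citing \cite[Theorem~1, p.~46]{E.M.S}), the Poisson semigroup is positivity preserving, contractively regular on $L^p(\real^d)$ for $1<p<\8$, strongly continuous, and analytic; the point is that each of these structural features --- the contractivity, the regularity constant, and the angle of analyticity --- is independent of the underlying space $\real^d$, hence of $d$. Xu's theorem \cite{Xu}, which establishes the $X$-valued maximal inequality for analytic semigroups whenever $X$ is a UMD lattice, therefore applies and yields $\norm{P^*f}_{L^p(\real^d;X)}\le C\norm{f}_{L^p(\real^d;X)}$ with $C$ depending only on $p$, on the UMD constant of $X$, and on the convexity/concavity exponents of $X$, but not on $d$. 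This is exactly the assertion of the lemma.

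I would also point out a more self-contained route that makes the dimension independence transparent: dominate $P^*f$ pointwise (in the lattice sense) by the Hardy--Littlewood maximal function of $f$. For each $t>0$ the Poisson kernel $P_t$ is nonnegative, radially decreasing, and normalized, $\int_{\real^d}P_t=1$; writing $P_t$ through the layer-cake formula as a positive superposition $P_t(x)=\int_0^\8\mathbf{1}_{B_\rho}(x)\,\mu_t(d\rho)$ with $\int_0^\8|B_\rho|\,\mu_t(d\rho)=1$, one obtains, for $f\ge 0$ in $X$,
\[
(P_t\ast f)(x)=\int_0^\8|B_\rho|\,\Big(\tfrac{1}{|B_\rho|}\int_{B_\rho}f(x-y)\,dy\Big)\,\mu_t(d\rho)\le M^{HL}f(x),
\]
where $M^{HL}$ denotes the $X$-valued (Fefferman--Stein) Hardy--Littlewood maximal operator; since $|P_t\ast f|\le P_t\ast|f|$ in $X$, taking the supremum over $t$ gives $P^*f\le M^{HL}f$ pointwise. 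The dimension-free boundedness of $M^{HL}$ on $L^p(\real^d;X)$ for a UMD lattice $X$ is precisely the theorem of Deleaval and Kriegler \cite{D.K}, and the lemma follows.

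The only point needing care in either argument is the verification that the constant is genuinely $d$-free: in the first route one must confirm that the constant furnished by \cite{Xu} depends on the semigroup only through quantities (analyticity angle, regularity constant) that are themselves $d$-free, which holds for $(P_t)_{t>0}$; in the second route the $d$-freeness is inherited directly from \cite{D.K}. Neither presents a genuine obstacle, so I expect this lemma to amount to a citation together with a short verification rather than a substantial proof.
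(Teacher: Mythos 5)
Your primary argument is exactly what the paper does: Lemma \ref{maximal inequality} is stated as a citation of Xu \cite{Xu} applied to the Poisson semigroup, whose contractive regularity, strong continuity and analyticity (with all relevant constants dimension-free) are recorded just before via \cite[Theorem~1, p.~46]{E.M.S}; the paper gives no further proof, so your verification that the constant from \cite{Xu} enters only through $p$, the lattice $X$, and $d$-free semigroup data is precisely the (implicit) content of the paper's treatment. Your second, alternative route --- dominating $P^*f$ pointwise in the lattice sense by the Hardy--Littlewood maximal function via the layer-cake representation of the radially decreasing, normalized Poisson kernel, and then invoking the dimension-free Deleaval--Kriegler bound (the paper's Lemma \ref{M.L inequality}) --- is also correct and is not the paper's argument; it has the advantage of being self-contained modulo a result the paper already quotes, and of making the dimension independence completely transparent (the domination constant is $\|P_1\|_{L^1}=1$), at the price of not extending to other semigroups for which Xu's abstract theorem would still apply. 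Either way the lemma holds as stated.
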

For UMD lattice-valued Hardy-Littlewood maximal operator
$$Mf(x,\omega)=\sup_{t>0}A_t|f|(x,\omega),$$
Deleaval and Kriegler \cite{D.K} have proved the following lemma.
\begin{lem}\label{M.L inequality}
For every $1<p<\8$, there exists a constant $C_{p,X}$ independent of the dimension $d$ such that for every $f\in L^p(\real^d;X)$ we have
$$\norm{Mf}_{L^p(\real^d;X)}\leq C_{p,X}\norm{f}_{L^p(\real^d;X)}.$$
\end{lem}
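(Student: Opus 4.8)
The plan is to dominate the maximal operator $M$ by the maximal operator of the Poisson semigroup, whose dimension-free $L^p(\real^d;X)$ bound is already available in Lemma~\ref{maximal inequality}, up to a remainder governed by dimension-free square functions. First I would reduce to $f\ge 0$ in the lattice sense: the $A_t$ are positivity preserving and $\norm{|f|}_{L^p(\real^d;X)}=\norm{f}_{L^p(\real^d;X)}$, so replacing $f$ by $|f|$ we may assume $Mf=\sup_{t>0}A_tf=\sup_{t>0}|A_tf|$. I note in passing that the elementary pointwise bound $Mf\le V_q(A_tf:t>0)$ exhibits Lemma~\ref{M.L inequality} as a formal consequence of Theorem~\ref{main thm}; but since this lemma is invoked in Section~\ref{ST3} in the proof of that very theorem, a logically independent argument is needed.

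The key structural fact is that in high dimension the ball average of radius $t$ acts, on the Fourier side, like a Gaussian of frequency width $\sqrt d/t$: the one-dimensional marginal of the normalized indicator $\frac{1}{|B_1|}\mathbf 1_{B_1}$ concentrates at scale $1/\sqrt d$, so its multiplier $\Phi$ satisfies $\Phi(\xi)\approx e^{-2\pi^2|\xi|^2/d}$ near the origin and only begins to decay at $|\xi|\sim\sqrt d$. Consequently a comparison of $A_t$ with the semigroup \emph{at the same scale} is dimension-dependent, and one must instead compare $A_t$ with $P_{s(t)}$ along the dimension-adapted reparametrization $s(t)=t/\sqrt d$, for which both transitions occur at $t|\xi|\sim\sqrt d$. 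Setting $D_t:=(A_t-P_{s(t)})f$, one has $D_t\to 0$ as $t\to 0^+$ and as $t\to\8$, so the identity $|D_t|^2=-\int_t^\8\frac{d}{ds}|D_s|^2\,ds$ together with the Cauchy--Schwarz inequality yields the pointwise bound
\begin{equation*}
\sup_{t>0}|D_t|\le\sqrt2\,G_1^{1/2}G_2^{1/2},\qquad G_1=\Big(\int_0^\8|D_t|^2\frac{dt}{t}\Big)^{1/2},\quad G_2=\Big(\int_0^\8\big|t\tfrac{d}{dt}D_t\big|^2\frac{dt}{t}\Big)^{1/2}.
\end{equation*}
Since $Mf\le\sup_{t>0}|P_{s(t)}f|+\sup_{t>0}|D_t|\le P^\ast f+\sqrt2\,G_1^{1/2}G_2^{1/2}$, the Cauchy--Schwarz inequality in the lattice reduces the whole lemma to dimension-free $L^p(\real^d;X)$ bounds for $P^\ast f$, $G_1$ and $G_2$. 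Of these, $P^\ast f$ is controlled by Lemma~\ref{maximal inequality}, while $G_2$ splits by the triangle inequality into $(\int_0^\8|t\frac{d}{dt}A_tf|^2\frac{dt}{t})^{1/2}$, dimension-free by~(\ref{estimate for square function}), and a term that under the change of variables $s=t/\sqrt d$ equals the Poisson $g$-function $(\int_0^\8|s\frac{d}{ds}P_sf|^2\frac{ds}{s})^{1/2}$, dimension-free by Lemma~\ref{g function estimate}; hence $\norm{G_2}_{L^p(\real^d;X)}\lesssim_{p,X}\norm{f}_{L^p(\real^d;X)}$.

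The main obstacle is the dimension-free bound for the non-derivative comparison square function $G_1$. On $L^2$ this amounts, by Plancherel, to
\begin{equation*}
\sup_{\xi}\int_0^\8\big|\Phi(t\xi)-e^{-2\pi t|\xi|/\sqrt d}\big|^2\frac{dt}{t}\le C
\end{equation*}
with $C$ independent of $d$, which is exactly where the matching of the two profiles is used: the ranges $t|\xi|\lesssim\sqrt d$ (first-order agreement of the profiles) and $t|\xi|\gtrsim\sqrt d$ (joint decay) each contribute $O(1)$ uniformly in $d$. The delicate point is the passage from this $L^2$ estimate to the dimension-free $L^p(\real^d;X)$ bound: unlike $G_2$, the square function $G_1$ is not a derivative object and does not reduce, through the coordinate formula $A_t=d\int_0^1 r^{d-1}S_{tr}\,dr$, to the spherical square function treated in Proposition~\ref{S-lem}, while the Calder\'on--Zygmund/weighted route of Section~\ref{ST2} produces constants that grow with $d$. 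I would therefore handle $G_1$ by embedding the comparison in the analytic family $K^\alpha$ of Section~\ref{ST2}, interpolating the dimension-free $L^2$ estimate above against a fixed-dimension $L^p(\real^d;X)$ estimate via the factorization $X=[L^2(\mu),Y]_{\theta_0}$, and upgrading to all dimensions by the rotation-averaging scheme over $\mathcal O(d)$, exactly as in the proof of Proposition~\ref{S-lem}. This last step, reconciling the non-derivative comparison with the dimension-free interpolation machinery, is where the real work lies.
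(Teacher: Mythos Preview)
The paper does not prove this lemma at all: it is quoted as a result of Deleaval and Kriegler \cite{D.K}, with no argument given. So there is no ``paper's own proof'' to compare against; the relevant question is whether your sketch stands on its own.

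Your overall architecture is the right one (it is essentially the Stein--Str\"{o}mberg strategy that \cite{D.K} vector-valuizes): compare the averaging operators with a semigroup at the correct dimension-adapted scale, control the maximal difference by two square functions, and appeal to Lemmas~\ref{g function estimate} and \ref{maximal inequality} for the semigroup pieces. The reduction $\sup_t|D_t|\le\sqrt2\,G_1^{1/2}G_2^{1/2}$ and the $L^2$ bound for $G_1$ are fine. The gap is exactly where you flag it, and it is a real one: the rotation-averaging step in the proof of Proposition~\ref{S-lem} rests on the identity $S_t^{(d)}=\int_{\mathcal O(d)}\rho\circ S_t^{(d_0)}\circ\rho^{-1}\,d\rho$, which is special to \emph{spherical} means. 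Neither $A_t^{(d)}$ nor $P_{t/\sqrt d}^{(d)}$ is the $\mathcal O(d)$-average of its $d_0$-dimensional counterpart at the same scale (for $A_t$ one gets an average over $d_0$-dimensional disk-slices, and for the Poisson semigroup the scale $t/\sqrt d$ does not match $t/\sqrt{d_0}$), so your proposed ``upgrade to all dimensions'' for $G_1$ does not go through as stated.

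The standard fix, and what \cite{D.K} actually do following \cite{SS}, is to run your whole comparison argument for $S_t$ rather than $A_t$: set $D_t=S_tf-P_{ct}f$ for a suitable absolute constant $c$, bound the spherical maximal function $\sup_t|S_tf|$ by $P^\ast f+\sqrt2\,G_1^{1/2}G_2^{1/2}$, and then the rotation-averaging trick applies directly to each of $G_1$, $G_2$, reducing the dimension-free $L^p(\real^d;X)$ estimate to a fixed dimension $d_0(p,X)$ exactly as in Proposition~\ref{S-lem}. The ball maximal function is then recovered a posteriori from the pointwise domination $A_t|f|=d\int_0^1 r^{d-1}S_{tr}|f|\,dr\le\sup_s S_s|f|$. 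With this modification your sketch becomes a correct proof; without it, the $G_1$ step is genuinely incomplete.
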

\medskip

Let $B$ be a Euclidean ball, we assume that $|B|=1$. By \cite{Bour1}, there is an isotropic constant $L=L(B)>0$ such that for every unit vector $\xi\in\mathbb{S}^{d-1}$ we have
\begin{equation}\label{isotropic constant L}
  L=\big(\int_{B}(\xi\cdot x)^2dx\big)^{1/2}.
\end{equation}
Denote $m(\xi)=\int_Be^{-2\pi ix\cdot\xi}dx$, then we have the following proposition (cf.\cite[eq. (10),(11)]{Bour1}).
\begin{prop}\label{constant m}
Let $B$ be a Euclidean ball on $\real^d$. Let $|B|=1$, then there exists a constant $C>0$ independent of the dimension $d$ such that for every $\xi\in\real^d$ we have
$$|m(\xi)|\le C(L|\xi|)^{-1},\qquad |m(\xi)-1|\le CL|\xi|.$$
\end{prop}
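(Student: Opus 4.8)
The plan is to reduce both estimates to elementary one-dimensional facts by slicing the ball $B$ in the direction of $\xi$, using the isotropy condition~(\ref{isotropic constant L}) to control the relevant moments uniformly in $d$. Write $\xi = |\xi|\theta$ with $\theta\in\mathbb{S}^{d-1}$, and for $s\in\real$ let $\pi(s)$ denote the $(d-1)$-dimensional volume of the slice $\{x\in B : x\cdot\theta = s\}$; then $m(\xi) = \int_\real e^{-2\pi i s|\xi|}\pi(s)\,ds$, and since $|B|=1$ the function $\pi$ is a probability density on $\real$. The first moment of $\pi$ vanishes by the symmetry of $B$, and by~(\ref{isotropic constant L}) its second moment is exactly $L^2$; moreover $\pi$ is supported in an interval of length $O(L)$ (a ball of unit volume in high dimension has diameter comparable to $\sqrt d$, but its width in any fixed direction, weighted appropriately, is governed by $L$ — more precisely, by the Brunn–Minkowski/Hensley type bounds from~\cite{Bour1}, $\pi$ is a log-concave density with variance $L^2$, so $\|\pi\|_\infty \sim L^{-1}$ and $\pi$ decays on scale $L$).

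\emph{The lower-frequency estimate} $|m(\xi)-1|\le CL|\xi|$: since $\int\pi = 1$, we have $m(\xi)-1 = \int_\real (e^{-2\pi i s|\xi|}-1)\pi(s)\,ds$; using $|e^{-2\pi i s|\xi|}-1-(-2\pi i s|\xi|)|\le 2\pi^2 s^2|\xi|^2$ together with $\int s\,\pi(s)\,ds = 0$ gives $|m(\xi)-1|\le 2\pi^2|\xi|^2\int s^2\pi(s)\,ds = 2\pi^2 L^2|\xi|^2$. This already yields the claimed bound when $L|\xi|\le 1$; when $L|\xi|>1$ it is trivial since $|m(\xi)-1|\le 2$. \emph{The high-frequency estimate} $|m(\xi)|\le C(L|\xi|)^{-1}$: this is where one must work. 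The idea is the standard one of exploiting cancellation via one integration by parts in the slicing variable: since $\pi$ is log-concave with $\|\pi\|_\infty\sim L^{-1}$, its total variation $\|\pi'\|_{L^1}$ is $O(L^{-1})$ as well (a log-concave density is unimodal, so its total variation equals $2\|\pi\|_\infty$), and then $|m(\xi)| = \big|\int_\real e^{-2\pi i s|\xi|}\pi(s)\,ds\big| = (2\pi|\xi|)^{-1}\big|\int_\real e^{-2\pi i s|\xi|}\pi'(s)\,ds\big|\le (2\pi|\xi|)^{-1}\|\pi'\|_{L^1}\le C(L|\xi|)^{-1}$. Combining the two ranges gives both inequalities with dimension-free constants.

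The technical heart — and the step I expect to be the main obstacle — is establishing the two structural facts about the slice density $\pi$ with constants independent of $d$: namely that $\pi$ is (log-)concave on its support with $\|\pi\|_\infty\lesssim L^{-1}$ and $\|\pi'\|_{L^1}\lesssim L^{-1}$. Log-concavity of $\pi$ follows from the Brunn–Minkowski inequality (marginals of a convex body are log-concave), but pinning the sup-norm and the variation to the scale $L^{-1}$ uniformly in $d$ is precisely the content of the estimates in Bourgain's paper~\cite{Bour1}; since the proposition is quoted directly from~\cite[eq.~(10),(11)]{Bour1}, the cleanest route in this note is simply to invoke those estimates rather than reprove them, and to record only the short reductions above. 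I would therefore present the proof as: (i) set up the slicing and record $\int\pi=1$, $\int s\pi=0$, $\int s^2\pi = L^2$ from~(\ref{isotropic constant L}) and the symmetry of $B$; (ii) derive $|m(\xi)-1|\le 2\pi^2 L^2|\xi|^2$ and conclude the second inequality; (iii) cite~\cite{Bour1} for $\|\pi'\|_{L^1}\lesssim L^{-1}$ and integrate by parts to conclude the first inequality.
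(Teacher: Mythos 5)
Your proposal is correct and is essentially the paper's own treatment: the paper does not prove Proposition~\ref{constant m} but quotes it directly from \cite[eq. (10),(11)]{Bour1}, and your slicing argument (the marginal density $\pi$ of $B$ in the direction of $\xi$ has mean zero and variance $L^2$; a Taylor expansion with the vanishing first moment gives the low-frequency bound, and one integration by parts together with $\norm{\pi'}_{L^1}\le C L^{-1}$, coming from log-concavity and Hensley-type bounds, gives the high-frequency bound) is precisely Bourgain's original proof of those estimates. The only blemish is the passing remark that $\pi$ is supported in an interval of length $O(L)$ --- the support actually has length comparable to $\sqrt{d}\,L$ --- but you correct this in the same sentence, and since the argument only uses the variation bound $\norm{\pi'}_{L^1}\le CL^{-1}$ rather than any support-size claim, nothing in the proof is affected.
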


Associating with the Poisson semigroup, for every $n\in\mathbb{Z}$, we defined the projections $W_n=P_{L2^n}-P_{L2^{n-1}}$, where $L=L(B)$ is isotropic constant which is defined in \eqref{isotropic constant L}. By this definition, we can see that, for every $f\in\mathcal{S}(\real^d)\otimes X$,
\begin{equation}\label{function decomposition}
  f=\sum_{n\in\mathbb{Z}}W_nf,\qquad\textit{for}~a.e.~(x,\omega)\in \real^d\times\Omega.
\end{equation}

For each $n\in\mathbb{Z}$, we have
$$W_nf(x,\omega)=\int_{L2^{n-1}}^{L2^n}\frac{d}{dt}P_tf(x,\omega)dt.$$
Thus, by the Cauchy-Schwarz inequality we get
\begin{equation*}
  \begin{split}
 |W_nf(x,\omega)|^2 &\leq\bigg(\int_{L2^{n-1}}^{L2^n}|\frac{d}{dt}P_tf(x,\omega)|dt\bigg)^2\\
&\leq L2^{n-1}\int_{L2^{n-1}}^{L2^n}|\frac{d}{dt}P_tf(x,\omega)|^2dt\\
&\leq\int_{L2^{n-1}}^{L2^n}t|\frac{d}{dt}P_tf(x,\omega)|^2dt.
   \end{split}
\end{equation*}

By Lemma \ref{g function estimate}, we have the following lemma.
\begin{lem}\label{Littlewood-paley inequalities}
For every $1<p<\8$ there exists a constant $C_{p,X}>0$ independent of the dimension such that for every $f\in L^p(\real^d;X)$ we have
$$\|{\big(\sum_{n\in\mathbb{Z}}|W_nf|^2\big)^{1/2}}\|_{L^p(\real^d;X)}\leq C_{p,X}\norm{f}_{L^p(\real^d;X)}.$$
\end{lem}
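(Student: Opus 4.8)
The plan is to combine the pointwise estimate on $|W_nf|^2$ already derived above with the square function bound of Lemma~\ref{g function estimate}. The first step is to note that the dyadic intervals $(L2^{n-1},L2^n]$, $n\in\mathbb{Z}$, tile $(0,\8)$ up to a set of measure zero, so summing the displayed inequality $|W_nf(x,\omega)|^2\leq\int_{L2^{n-1}}^{L2^n}t|\frac{d}{dt}P_tf(x,\omega)|^2\,dt$ over all $n\in\mathbb{Z}$ yields the pointwise bound
$$\sum_{n\in\mathbb{Z}}|W_nf(x,\omega)|^2\leq\int_0^\8 t\Big|\frac{d}{dt}P_tf(x,\omega)\Big|^2\,dt=g(f)(x,\omega)^2$$
for a.e. $(x,\omega)\in\real^d\times\Omega$, where $g(f)$ is precisely the Poisson square function introduced before Lemma~\ref{g function estimate}.

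The second step is to transfer this scalar pointwise inequality to the lattice level. Since $X$ is a Banach lattice of measurable functions on $(\Omega,\mu)$, for each finite $N$ the lattice square function $\big(\sum_{|n|\leq N}|W_nf(x,\cdot)|^2\big)^{1/2}$ is a well-defined element of $X$, and the pointwise (in $\omega$) inequality above, together with the monotonicity of the lattice operations, gives $\big(\sum_{|n|\leq N}|W_nf(x,\cdot)|^2\big)^{1/2}\leq g(f)(x,\cdot)$ as elements of $X$. Because a UMD lattice is reflexive and hence has order-continuous norm, letting $N\to\8$ shows that $\big(\sum_{n\in\mathbb{Z}}|W_nf(x,\cdot)|^2\big)^{1/2}$ is itself a well-defined element of $X$, still dominated by $g(f)(x,\cdot)$. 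Taking $X$-norms and then the $L^p(\real^d)$-norm in $x$ yields
$$\Big\|\big(\sum_{n\in\mathbb{Z}}|W_nf|^2\big)^{1/2}\Big\|_{L^p(\real^d;X)}\leq\norm{g(f)}_{L^p(\real^d;X)}.$$

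The final step is to invoke Lemma~\ref{g function estimate}, which bounds the right-hand side by $C_{p,X}\norm{f}_{L^p(\real^d;X)}$ with $C_{p,X}$ independent of the dimension $d$; this is exactly the claimed inequality. The argument presents no genuine obstacle: the only point that needs a little care is the passage to the infinite lattice sum, which is handled by order continuity of UMD lattices (alternatively, the constant in the finite-$N$ estimate is independent of $N$, so a routine limiting argument suffices). Finally, the a priori restriction $f\in\mathcal{S}(\real^d)\otimes X$ used to make sense of $\sum_n W_nf$ via \eqref{function decomposition} is removed by the density of $\mathcal{S}(\real^d)\otimes X$ in $L^p(\real^d;X)$.
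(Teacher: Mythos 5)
Your proof is correct and follows exactly the route the paper intends: sum the Cauchy--Schwarz bound $|W_nf|^2\leq\int_{L2^{n-1}}^{L2^n}t|\tfrac{d}{dt}P_tf|^2\,dt$ over the disjoint dyadic intervals to dominate the square function pointwise by $g(f)$, then apply Lemma~\ref{g function estimate}. Your extra remarks on order continuity and density only make explicit routine details the paper leaves implicit.
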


\medskip


We are now ready to prove Theorem \ref{L-estimate} using previous lemmas and ideas from \cite[Section 3]{J.M.E.B}.
We contain this proof below for the sake of completeness, however we delete some standard details
which could be found in \cite[Section 3]{J.M.E.B}.

\begin{proof}[Proof of Theorem \ref{L-estimate}]

We have the following control
\begin{equation}\label{L-decomposition inequality}
  \begin{split}
\norm{V_q(A_{2^n})f:n\in\mathbb{Z}}_{L^p(\real^d;X)}\leq \norm{V_q(P_{L2^n}f:n\in\mathbb{Z})}_{L^p(\real^d;X)} \\
+\|{\big(\sum_{n\in\mathbb{Z}}|A_{2^n}f-P_{L2^n}f|^2\big)^{1/2}}\|_{L^p(\real^d;X)}.
  \end{split}
\end{equation}
It suffices to estimate the second term on the right hand side while the first term is
bounded on $L^p(\real^d;X)$ by Lemma \ref{Poisson semigroup q-variation}.
Using \eqref{function decomposition}, we need to show that
\begin{equation}\label{L-inequalities 1}
  \begin{split}
\|{\big(\sum_{n\in\mathbb{Z}}|(A_{2^n}W_{j+n}f(x,\omega)-P_{L2^n}W_{j+n}f(x,\omega))|^2\big)^{1/2}}\|_{L^p(\real^d;X)}\\
\leq C_{p,X} \sum_{j\in \mathbb{Z}}2^{-\delta_p|j|}\|f(x,\omega)\|_{L^p(\real^d;X)}
  \end{split}
\end{equation}
since this implies that
\begin{equation*}
  \begin{split}
&\|{\big(\sum_{n\in\mathbb{Z}}|A_{2^n}f(x,\omega)-P_{L2^n}f(x,\omega)|^2\big)^{1/2}}\|_{L^p(\real^d;X)}\\
\leq &C_{p,X}\|f(x,\omega)\|_{L^p(\real^d;X)}.
  \end{split}
\end{equation*}

From now on we focus on (\ref{L-inequalities 1}),
which follows from interpolation and the following two inequalities:
\begin{equation}\label{L-inequalities 2}
\begin{split}
 \|{\big(\sum_{n\in\mathbb{Z}}|A_{2^n}W_{j+n}f(x,\omega)|^2\big)^{1/2}}\|_{L^p(\real^d;Z)}+&
\|{\big(\sum_{n\in\mathbb{Z}}|P_{L2^n}W_{j+n}f(x,\omega))|^2\big)^{1/2}}\|_{L^p(\real^d;Z)}\\
&\leq C_{p,Z} \norm{f(x,\omega)}_{L^p(\real^d;Z)},
\end{split}
\end{equation}
for any UMD lattice $Z$,
and
\begin{equation}\label{L-inequalities 3}
\begin{split}
  \|{\big(\sum_{n\in\mathbb{Z}}|(A_{2^n}W_{j+n}f(x,\omega)-P_{L2^n}W_{j+n}f(x,\omega))|^2\big)^{1/2}}\|_{L^2(\real^d;L^2(\mu))}\\
\leq C 2^{-|j|/2}\norm{f(x,\omega)}_{L^2(\real^d;L^2(\mu))}.
\end{split}
\end{equation}

Now under the assumption that (\ref{L-inequalities 2}) and (\ref{L-inequalities 3}) are true, we prove that the inequality (\ref{L-inequalities 1}) is true.

Since there exists a $\theta_0\in(0,1)$, such that $X=[L^2(\mu),Y]_{\theta_0}$, where $Y$ is another lattice. Let $Z=Y$ in (\ref{L-inequalities 2}),  interpolating with (\ref{L-inequalities 3}),
we have 
\begin{equation}\label{L-inequalities p0}
\begin{split}
  \|\big(\sum_{n\in\mathbb{Z}}|\sum_{j\in\mathbb{Z}}(A_{2^n}W_{j+n}f-P_{L2^n}W_{j+n}f)|^2\big)^{1/2}\|_{L^{2}(\real^d;X)}\\
  \leq C_{X} 2^{-(1-\theta_0)|j|/2}\norm{f(x,\omega)}_{L^{2}(\real^d;X)}.
  \end{split}
\end{equation}

Then let $Z=X$ in (\ref{L-inequalities 2}), interpolating with (\ref{L-inequalities p0}),
we have
\begin{equation*}
\begin{split}
  \|\big(\sum_{n\in\mathbb{Z}}|\sum_{j\in\mathbb{Z}}(A_{2^n}W_{j+n}f-P_{L2^n}W_{j+n}f)|^2\big)^{1/2}&\|_{L^{p}(\real^d;X)}\\
  \leq C_{p,X} 2^{-(1-\theta_0)(1-\theta)|j|/2}\norm{f(x,\omega)}_{L^p(\real^d;X)},
\end{split}
\end{equation*}
where $\frac{1}{p}=\frac{1-\theta}{2}+\frac{\theta}{q}$.
This proved that (\ref{L-inequalities 1}) is true for any $1<p<\8$. Then the remaining task is to show inequalities (\ref{L-inequalities 2}) and (\ref{L-inequalities 3}).

By Lemma~\ref{Littlewood-paley inequalities},  estimate  (\ref{L-inequalities 2}) is obviously a consequence of the following two estimates,
\begin{equation}\label{L-inequalities 4}
\|{\big(\sum_{n\in\mathbb{Z}}|A_{2^n}g_n(x,\omega)|^2\big)^{1/2}}\|_{L^p(\real^d;Z)}
\leq C_{p,Z}\|{\big(\sum_{n\in\mathbb{Z}}|g_n(x,\omega)|^2\big)^{1/2}}\|_{L^p(\real^d;Z)}
\end{equation}
and
\begin{equation}\label{L-inequalities 5}
  \|{\big(\sum_{n\in\mathbb{Z}}|P_{L2^n}g_{n}(x,\omega)|^2\big)^{1/2}}\|_{L^p(\real^d;Z)}
\leq C_p \|{\big(\sum_{n\in\mathbb{Z}}|g_{n}(x,\omega)|^2\big)^{1/2}}\|_{L^p(\real^d;Z)}
\end{equation}
for all $p\in(1,\8)$.
Estimates \eqref{L-inequalities 4} and \eqref{L-inequalities 5} are proved using complex interpolation,
the duality identity $C(p,s,Z)=C(p^{\prime},s^{\prime},Z^*)$, together with Lemma \ref{maximal inequality} and Lemma \ref{M.L inequality} .

The estimate \eqref{L-inequalities 3} follows from the argument in \cite[Section 3]{J.M.E.B} if we replace
scale-valued functions by vector-valued functions in the argument.
We remark that Proposition~\ref{constant m} is used in this process.

This finishes the proof of Theorem \ref{L-inequalities}.

\medskip

\end{proof}

\noindent \textbf{Acknowledgment.} Guixiang Hong is partially supported by the NSF of China (grant no. 11601396, 11501169, 11431011), and 1000 Young Talent Researcher Program of China (no. 429900018- 101150(2016)). 
Danqing He is supported by NNSF of China (No. 11701583),  the Guangdong Natural Science Foundation
(No. 2017A030310054), and the
Fundamental Research Funds for the Central Universities (No. 17lgpy11).

\medskip
\bigskip

\end{document}